\newcommand{\Z}{\mathbb{Z}}
\newcommand{\Q}{\mathbb{Q}}
\newcommand{\N}{\mathbb{N}}
\newcommand{\B}{\mathcal{B}}
\renewcommand{\mod}[1]{{\ifmmode\text{\rm\ (mod~$#1$)}\else\discretionary{}{}{\hbox{ }}\rm(mod~$#1$)\fi}}
\newtheorem{theorem}{Theorem}
\newtheorem{proposition}[theorem]{Proposition}
\newtheorem{corollary}[theorem]{Corollary}
\newtheorem{lemma}[theorem]{Lemma}
\theoremstyle{definition}
\newtheorem{definition}[theorem]{Definition}
\newtheorem{remark}[theorem]{Remark}
\numberwithin{equation}{section}
\numberwithin{theorem}{section}
\begin{document}

\title{Multiplicative groups avoiding a fixed group}

\author{Matthias Hannesson}
\address{Department of Mathematics, Vancouver Island University \\ 900 Fifth Street \\ Nanaimo, BC, Canada \ V9R 5S5 }
\email{matthiashannesson@gmail.com}

\author{Greg Martin}
\address{Department of Mathematics, University of British Columbia \\ Room 121, 1984 Mathematics Road \\ Vancouver, BC, Canada \ V6T 1Z2}
\email{gerg@math.ubc.ca}

\begin{abstract}
We know that any finite abelian group~$G$ appears as a subgroup of {\em infinitely many} multiplicative groups~$\mathbb{Z}_n^\times$ (the abelian groups of size $\phi(n)$ that are the multiplicative groups of units in the rings $\mathbb{Z}/n\mathbb{Z}$). It seems to be less well appeciated that~$G$ appears as a subgroup of {\em almost all} multiplicative groups~$\mathbb{Z}_n^\times$. We exhibit an asymptotic formula for the counting function of those integers whose multiplicative group fails to contain a copy of~$G$, for all finite abelian groups~$G$ (other than the trivial one-element group).
\end{abstract}

\keywords{Number theory; multiplicative group; Selberg--Delange method.}

\subjclass[2010]{11N25, 11N37, 11N45, 11N64, 20K01}

\maketitle

\section{Introduction}

We know that any finite abelian group~$G$ appears as a subgroup of {\em infinitely many} multiplicative groups~$\Z_n^\times$ (the abelian groups of size $\phi(n)$ that are the multiplicative groups of units in the rings $\Z/n\Z$). The proof of this classic and satisfying problem, which might appear on homework sets for graduate students for example, proceeds as follows: write~$G$ as a direct sum of cyclic groups~$\Z_{m_j}$ with orders $m_1,\dots,m_\ell$; choose primes $p_j\equiv1\mod{m_j}$, so that $\Z_{m_j}$ is a subgroup of $\Z_{p-1} \cong \Z_p^\times$; and then $\Z_{p_1\cdots p_\ell}^\times \cong \Z_{p_1}^\times \times\cdots\times \Z_{p_\ell}^\times$ contains a copy of~$G$.

It seems to be less well appeciated that~$G$ appears as a subgroup of {\em almost all} multiplicative groups~$\Z_n^\times$. In other words, if we define
\begin{equation} \label{SxG def}
S(x;G) = \#\{ n\le x\colon G \nleq \Z_n^\times \} ,
\end{equation}
then $S(x;G) = o(x)$ for any finite abelian group~$G$. The essential reason, from an ``anatomy of integers'' standpoint, is that almost all integers are divisible by any fixed number of primes from any prescribed arithmetic progressions; in particular, almost all integers are divisible by~$\ell$ primes~$p_j$ each congruent to $1\mod{m_j}$.

Whenever analytic number theorists encounter a set of integers whose counting function is $o(x)$, we are motivated to seek better quantitative information about the size of that set. The purpose of this paper is to exhibit an asymptotic formula for $S (x; G)$, the counting function of those integers whose multiplicative group fails to contain a copy of~$G$, for all finite abelian groups~$G$ (other than the trivial one-element group).

The simplest case is when $G \cong \Z_2^{k}$, in which case estimating $S(x;G)$ essentially reduces to counting integers with at most $k-1$ distinct prime factors (as we will show in Section~\ref{proof of main theorem 0}), which is a classical result in analytic number theory.

\begin{theorem} \label{main theorem 0}
For any integer $k \geq 2$,
\begin{equation} \label{main theorem 0 eq}
S(x;\Z_2^k)
=
\frac3{2(k-2)!}
\frac{x (\log\log x)^{k-2}}{\log x}
\biggl(
1 +
O_k\biggl( \frac{1}{\log\log x} \biggr)
\biggr).
\end{equation}
\end{theorem}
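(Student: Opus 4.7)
\emph{Plan.} The first move is to rephrase the containment $\Z_2^k\le\Z_n^\times$ in terms of 2-rank. Using the Chinese Remainder Theorem, $\Z_n^\times\cong\prod_{p^a\|n}\Z_{p^a}^\times$; for each odd prime $p$, the factor $\Z_{p^a}^\times$ is cyclic of even order and contributes exactly one copy of $\Z_2$ to the 2-torsion subgroup, while $\Z_{2^a}^\times$ contributes 2-rank $0$ (when $a\le1$), $1$ (when $a=2$), or $2$ (when $a\ge3$). Writing $\omega^*(n)$ for the number of distinct odd prime divisors of $n$, the 2-rank of $\Z_n^\times$ equals $\omega^*(n)+\delta(n)$ where $\delta(n)\in\{0,1,2\}$ depends only on $v_2(n)$; thus $\Z_2^k\not\le\Z_n^\times$ iff $\omega^*(n)+\delta(n)\le k-1$.

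I would then split $S(x;\Z_2^k)=A(x)+B(x)+C(x)$ according as $v_2(n)\in\{0,1\}$, $v_2(n)=2$, or $v_2(n)\ge 3$, so the constraints become $\omega^*(n)\le k-1$, $\omega^*(n)\le k-2$, or $\omega^*(n)\le k-3$ respectively. The dominant contribution will come from the slice of $A(x)$ with $\omega^*(n)=k-1$, since this is the largest number of odd prime factors allowed anywhere.

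For the counts, I would invoke the classical Landau--Sathe--Selberg asymptotic
\[
\#\{m\le y:\omega(m)=j\}=\frac{y(\log\log y)^{j-1}}{(j-1)!\log y}\biggl(1+O_j\Bigl(\tfrac1{\log\log y}\Bigr)\biggr).
\]
Restricting to odd $m$ removes only those $m$ of the form $2^a m'$ with $m'$ odd and $\omega(m')=j-1$, whose total count is $O_j\bigl(y(\log\log y)^{j-2}/\log y\bigr)$ by a summation over $a\ge1$ --- of strictly lower order. Hence $N_j^*(y):=\#\{m\le y:m\text{ odd},\,\omega(m)=j\}$ satisfies the same asymptotic as the unrestricted count.

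Putting it together, the main term of $A(x)$ comes from $n=m$ and $n=2m$ with $m$ odd and $\omega(m)=k-1$:
\[
N_{k-1}^*(x)+N_{k-1}^*(x/2)=\frac{3}{2(k-2)!}\cdot\frac{x(\log\log x)^{k-2}}{\log x}\biggl(1+O_k\Bigl(\tfrac1{\log\log x}\Bigr)\biggr),
\]
which matches \eqref{main theorem 0 eq}. Every other sub-case --- the smaller values of $\omega^*(n)$ inside $A(x)$, and every contribution to $B(x)$ and $C(x)$ --- drops the power of $\log\log x$ by at least one and so is absorbed into the error. The principal obstacle is obtaining the quantitative error term $O_k(1/\log\log x)$ in the Landau-type count rather than the weaker $o(1)$; this is standard, via the Selberg--Delange method applied to $\sum_n z^{\omega(n)}n^{-s}$ (whose Euler product factors off the prime $p=2$ cleanly), but requires some care in extracting the coefficient of $z^{k-1}$ with a uniform remainder.
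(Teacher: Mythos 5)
Your proposal is correct and follows essentially the same route as the paper: both identify the $2$-rank of $\Z_n^\times$ as the number of distinct odd prime factors plus a $0/1/2$ correction from the power of $2$, reduce the count to odd integers with a bounded number of distinct prime factors via the Selberg--Sathe asymptotic (with its quantitative $O_k(1/\log\log x)$ error), and extract the main term $\tfrac{3}{2(k-2)!}\,x(\log\log x)^{k-2}/\log x$ from the cases $v_2(n)\in\{0,1\}$ with exactly $k-1$ odd prime factors, absorbing all other cases (including the geometric sum over higher powers of $2$) into the error.
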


For more complicated groups~$G$, the statement of the asymptotic formula depends on some terminology related to the structure of~$G$. Recall that the {\em primary decomposition} of a finite abelian group~$G$ is a representation of~$G$ as the direct sum of cyclic groups whose orders are all powers of primes; this representation is unique up to permutations of the summands. For example, if $G$ is the multiplicative group modulo $11!$, then the Chinese remainder theorem together with the fact that odd prime powers possess primitive roots (and an additional fact about the structure of $\Z_{2^k}^\times$) allow us to see that
\begin{align}
G = \Z_{11!}^\times &= \Z_{2^8\cdot3^4\cdot5^2\cdot7\cdot11}^\times \notag \\
&\cong \Z_{2^8}^\times \times \Z_{3^4}^\times \times \Z_{5^2}^\times \times \Z_7^\times \times \Z_{11}^\times \notag \\
&\cong ( \Z_{2^6} \oplus \Z_2 ) \oplus \Z_{3^3\cdot 2} \oplus \Z_{5^1\cdot 4} \oplus \Z_6 \oplus \Z_{10} \notag \\
&\cong (\Z_{64} \oplus \Z_2) \oplus (\Z_{27} \oplus \Z_2) \oplus (\Z_5 \oplus \Z_4) \oplus (\Z_3 \oplus \Z_2) \oplus (\Z_5 \oplus \Z_2) \label{11!}
\end{align}
is its primary decomposition. In this example there are~$10$ primary summands counted with multiplicity; for our purposes, however, it turns out to be important to gather isomorphic summands together.

\begin{definition} \label{gathered def}
Let~$G$ be a finite abelian group. For any prime power $p^\alpha$ and any positive integer~$k$, we say that $\Z_{p^\alpha}^k$ is a {\em gathered summand} of~$G$ if the primary decomposition of~$G$ contains exactly~$k$ copies of $\Z_{p^\alpha}$, or equivalently if we can write $G \cong \Z_{p^\alpha}^k \times H$ where $\Z_{p^\alpha}$ is not a primary summand of~$H$.

We let $\Gamma(G)$ be the set of all gathered summands of $G$, so that
\begin{equation} \label{generic G decomp}
G \cong \prod_{\Z_{p^\alpha}^k \in \Gamma(G)} \Z_{p^\alpha}^k.
\end{equation}
For example, equation~\eqref{11!} shows that $\Gamma(\Z_{11!}^\times) = \{ \Z_2^4, \Z_3^1, \Z_4^1, \Z_5^2, \Z_{27}^1, \Z_{64}^1 \}$.
\end{definition}

We can now give an asymptotic formula for $S(x;G)$, for general finite abelian groups~$G$, in terms of a sum over the gathered summands of~$G$. The following theorem treats all remaining cases not already covered by Theorem~\ref{main theorem 0}.

\begin{theorem} \label{main theorem 1}
For any finite abelian group~$G \not\cong \Z_2^k$,
\begin{equation*}
S(x;G)
\sim
\sum_{\Z_{p^\alpha}^k \in \Gamma(G)}
K(p^\alpha, k)
\frac{x (\log\log x)^{k -1}}{(\log x)^{1 / \phi(p^\alpha)}},
\end{equation*}
where the constants $K(p^\alpha,k)$ are defined in Definition~\ref{sum constant} below.
\end{theorem}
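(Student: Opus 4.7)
The plan is to reduce the group-theoretic condition $G\leq\Z_n^\times$ to a combinatorial condition on the prime divisors of~$n$, and then invoke the Selberg--Delange method applied to primes in arithmetic progressions.

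First, using the Chinese remainder theorem and the structure of $\Z_{p^\beta}^\times$, I would express the $p^\alpha$-torsion rank $r_{p^\alpha}(\Z_n^\times)$ in terms of the prime factorization of~$n$. Up to an $O(1)$ correction coming from the primes $p$ and~$2$ themselves (the latter due to $\Z_{2^\beta}^\times\cong\Z_2\oplus\Z_{2^{\beta-2}}$ for $\beta\geq3$), this rank equals
\[
\omega_{p^\alpha}(n) := \#\bigl\{q\text{ prime}:q\mid n,\ q\equiv1\pmod{p^\alpha}\bigr\}.
\]
The subgroup criterion $G\leq\Z_n^\times$ is equivalent to $r_{p^\alpha}(G)\leq r_{p^\alpha}(\Z_n^\times)$ for every~$(p,\alpha)$, and since $r_{p^\alpha}(G)$ changes value precisely at the exponents~$\alpha$ for which $\Z_{p^\alpha}$ is a primary summand, the event $\{G\nleq\Z_n^\times\}$ decomposes as the union of ``bad events'' $A_{p^\alpha,k}$ indexed by $\Z_{p^\alpha}^k\in\Gamma(G)$.

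Second, for each gathered summand I would invoke the Selberg--Delange method applied to the prime class $\{q\equiv1\pmod{p^\alpha}\}$, which has Dirichlet density $1/\phi(p^\alpha)$. For each fixed $j\geq0$, this yields
\[
\#\{n\leq x:\omega_{p^\alpha}(n)=j\} \sim c_j\,\frac{x(\log\log x)^j}{(\log x)^{1/\phi(p^\alpha)}},
\]
with explicit constants~$c_j$ arising from the associated Euler product. Summing over $j$ up to the threshold dictated by the gathered summand and absorbing the $O(1)$ exceptional-prime correction into a density-zero error gives the individual asymptotic
\[
|A_{p^\alpha,k}| \sim K(p^\alpha,k)\,\frac{x(\log\log x)^{k-1}}{(\log x)^{1/\phi(p^\alpha)}}.
\]

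Third, I would combine the single-summand estimates via inclusion--exclusion over $\Gamma(G)$. For any two distinct gathered summands, a two-variable Selberg--Delange estimate for the two independent prime classes gives
\[
|A_{p_1^{\alpha_1},k_1}\cap A_{p_2^{\alpha_2},k_2}| \ll \frac{x(\log\log x)^{k_1+k_2-2}}{(\log x)^{1/\phi(p_1^{\alpha_1})+1/\phi(p_2^{\alpha_2})}},
\]
which is strictly lower order than either individual term (a factor $(\log x)^{1/\phi(\cdot)}$ in the denominator dwarfs any power of $\log\log x$), and higher-order intersections are smaller still. Hence $S(x;G)$ is asymptotic to the sum of single-summand terms, matching the statement of Theorem~\ref{main theorem 1}.

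The principal technical obstacle is the Selberg--Delange computation producing the explicit constant $K(p^\alpha,k)$, which demands careful analysis of Euler products restricted to primes in an arithmetic progression modulo~$p^\alpha$. A further delicate point is the case $p=2$, where the non-cyclic structure of $\Z_{2^\beta}^\times$ can shift the bounded correction by~$2$ rather than~$1$; and when several gathered summands share the same prime~$p$, the subdominant ones must be shown to be $o(\cdot)$ of the dominant term so that the stated asymptotic remains valid despite the nested nature of the underlying prime-counting conditions.
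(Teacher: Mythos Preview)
Your overall strategy---reduce to counting primes in residue classes via Selberg--Delange, then combine the gathered summands by inclusion--exclusion---is the same as the paper's. The gap is in your claim that $\{G\nleq\Z_n^\times\}$ is the union of bad events $A_{p^\alpha,k}$ indexed by $\Z_{p^\alpha}^k\in\Gamma(G)$, each satisfying $|A_{p^\alpha,k}|\sim K(p^\alpha,k)\,x(\log\log x)^{k-1}/(\log x)^{1/\phi(p^\alpha)}$. When two gathered summands share a prime~$p$, these two assertions are incompatible. If $A_{p^\alpha,k}=\{n:\Z_{p^\alpha}^k\nleq\Z_n^\times\}$, your asymptotic holds but the union misses integers: for $G=\Z_4\times\Z_2$ and $n=5$, the group $\Z_5^\times\cong\Z_4$ contains both $\Z_4$ and $\Z_2$ yet not~$G$. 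If instead $A_{p^\alpha,k}=\{n:r_{p^\alpha}(\Z_n^\times)<r_{p^\alpha}(G)\}$, the union is correct, but $r_{p^\alpha}(G)$ equals the sum of \emph{all} multiplicities $k_\beta$ with $\beta\ge\alpha$, not just~$k$, so the power of $\log\log x$ is wrong. Your pairwise intersection exponent $1/\phi(p_1^{\alpha_1})+1/\phi(p_2^{\alpha_2})$ is likewise invalid when $p_1=p_2$, since the two prime classes are nested rather than independent (and even for $p_1\ne p_2$ the correct exponent, as in Lemma~\ref{S(x;H1H2)}, is smaller by $1/\phi(p_1^{\alpha_1}p_2^{\alpha_2})$, though that discrepancy is harmless).

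The paper flags exactly this obstruction in the remark following Theorem~\ref{main theorem 2} and sidesteps it by proving Theorem~\ref{main theorem 2} first and deducing Theorem~\ref{main theorem 1} as a corollary. The missing device is Lemma~\ref{S(x;H)}: for a $p$-group $H\cong\Z_{p^{\alpha_1}}^{k_1}\times\cdots\times\Z_{p^{\alpha_\ell}}^{k_\ell}$ with $\alpha_1<\cdots<\alpha_\ell$ and $m=k_1+\cdots+k_\ell$, one uses the embedding $H\le\Z_{p^{\alpha_\ell-1}}^{m-k_\ell}\times\Z_{p^{\alpha_\ell}}^{k_\ell}$ together with the observation that any abelian group containing both $\Z_{p^{\alpha_\ell-1}}^{m}$ and $\Z_{p^{\alpha_\ell}}^{k_\ell}$ automatically contains that product, giving $S(x;H)=S(x;\Z_{p^{\alpha_\ell}}^{k_\ell})+O\bigl(S(x;\Z_{p^{\alpha_\ell-1}}^{m})\bigr)$; the error is lower order because $\phi(p^{\alpha_\ell-1})<\phi(p^{\alpha_\ell})$. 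Once each $p$-Sylow piece has been collapsed to its single top summand, your inclusion--exclusion across distinct primes goes through, and Theorem~\ref{main theorem 1} follows since every non-dominant term in its sum is~$o$ of the dominant one.
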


While the overall structure of this asymptotic formula is easy to parse, we can be even more precise with one additional piece of terminology.

\begin{definition} \label{dominant def}
The set of all possible gathered summands $\Z_{p^\alpha}^k$ can be endowed with a total preorder using the lexicographic ordering of the corresponding ordered pairs $( \phi(p^\alpha), k)$, that is, $\Z_{p^\alpha}^k \preccurlyeq \Z_{q^\beta}^\ell$ when either $\phi(p^\alpha) < \phi(q^\beta)$ or else $\phi(p^\alpha) = \phi(q^\beta)$ and $k \leq \ell$. Equivalently, $\Z_{p^\alpha}^k \preccurlyeq \Z_{q^\beta}^\ell$ precisely when ${x (\log\log x)^{k -1}}/{(\log x)^{1 / \phi(p^\alpha)}} \ll {x (\log\log x)^{\ell -1}}/{(\log x)^{1 / \phi(q^\beta)}}$.
Note that it is possible that simultaneously $\Z_{p^\alpha}^k \preccurlyeq \Z_{q^\beta}^\ell$ and $\Z_{q^\beta}^\ell \preccurlyeq \Z_{p^\alpha}^k$, as the example $\Z_3^k \preccurlyeq \Z_4^k$ and $\Z_4^k \preccurlyeq \Z_3^k$ demonstrates; pairs of prime powers with this property are the sole reason why this preorder is not a total order.

We then call a gathered summand of a finite abelian group~$G$ a {\em dominant summand} if it is a maximal element of $\Gamma(G)$ under this ordering.
\end{definition}

Not surprisingly, the dominant summands provide the dominant contribution to the asymptotic formula for $S(x;G)$.

\begin{theorem} \label{main theorem 2}
Let $G \not \cong \Z_2^k$ be a finite abelian group.
If~$G$ has a unique dominant summand $\Z_{p^\alpha}^k$, then
\begin{equation} \label{one dominant summand}
S(x;G)
=
K(p^\alpha, k)
\frac{x (\log\log x)^{k - 1}}{(\log x)^{1 / \phi(p^\alpha)}}
\biggl( 1 + O_G \biggl( \frac{1}{\log\log x} \biggr) \biggr);
\end{equation}
while if $G$ has two dominant summands $\Z_{p^\alpha}^k$ and $\Z_{q^\beta}^k$, then
\begin{equation} \label{two dominant summands}
S(x;G)
=
\bigl( K(p^\alpha, k) + K(q^\beta,k) \bigr)
\frac{x (\log\log x)^{k - 1}}{(\log x)^{1 / \phi(p^\alpha)}}
\biggl( 1 + O_G \biggl( \frac{1}{\log\log x} \biggr) \biggr).
\end{equation}
In both cases, the leading constants are defined in Definition~\ref{sum constant} below.
\end{theorem}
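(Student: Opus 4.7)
The plan is to extract Theorem \ref{main theorem 2} from Theorem \ref{main theorem 1} by isolating the largest terms in the sum. Two structural observations underlie the argument. First, an elementary check shows that for any positive integer $m$, at most two prime powers $p^\alpha$ satisfy $\phi(p^\alpha) = m$: at most one with $\alpha = 1$ (namely $p = m+1$, provided it is prime) and at most one with $\alpha \geq 2$ (since $\alpha - 1 = v_p(m)$ is forced and $p - 1 = m / p^{v_p(m)}$ then determines $p$; two distinct odd primes $p \neq p'$ cannot both satisfy this because the relations $(p')^{v_{p'}(m)} \mid (p - 1)$ and $p^{v_p(m)} \mid (p' - 1)$ together with $v_p(m), v_{p'}(m) \geq 1$ force $p^{v_p(m)} (p')^{v_{p'}(m)} \geq pp'$ yet $ < pp'$, while $p = 2$ demands $m$ be a power of two and excludes any odd companion). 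Consequently $\Gamma(G)$ has one or two maximal elements under $\preccurlyeq$, and two maximal elements are forced to share both $\phi(p^\alpha)$ and the exponent $k$.

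Second, I will invoke Theorem \ref{main theorem 1} in the quantitative form naturally produced by the Selberg--Delange method: each individual summand carries a relative error of size $O_G((\log\log x)^{-1})$ (mirroring the saving already visible in Theorem \ref{main theorem 0}). I then split $\Gamma(G) = D \cup N$ into dominant and non-dominant parts and bound each non-dominant term against a strictly $\succ$-related dominant one. For $\Z_{p^\alpha}^k \prec \Z_{q^\beta}^\ell$, the ratio of their main terms equals
\[
(\log\log x)^{k - \ell} (\log x)^{1/\phi(q^\beta) - 1/\phi(p^\alpha)}.
\]
When $\phi(p^\alpha) < \phi(q^\beta)$ the exponent of $\log x$ is strictly negative, giving a saving that eclipses any negative power of $\log\log x$; when $\phi(p^\alpha) = \phi(q^\beta)$ (forcing $k < \ell$) the saving is the exact factor $(\log\log x)^{k - \ell} \leq 1/\log\log x$. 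In either case every term indexed by $N$ is absorbable into the $O(\mathrm{main}/\log\log x)$ error of a single dominant term.

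Assembling the pieces, a unique dominant summand $\Z_{p^\alpha}^k$ gives (\ref{one dominant summand}) directly; two dominant summands $\Z_{p^\alpha}^k$ and $\Z_{q^\beta}^k$---which by the first observation share $\phi(p^\alpha) = \phi(q^\beta)$ and the exponent $k$---have identically-shaped main terms whose leading constants simply add, yielding (\ref{two dominant summands}). The main obstacle is not the elementary comparison of growth rates above but the bookkeeping behind the second observation: namely, confirming that the proof of Theorem \ref{main theorem 1} really produces a per-summand relative error of shape $O_G((\log\log x)^{-1})$ rather than a bare $o(1)$. Once that quantitative strengthening of Theorem \ref{main theorem 1} is in hand, Theorem \ref{main theorem 2} follows immediately.
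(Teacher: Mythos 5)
There is a genuine gap: your argument rests entirely on a ``quantitative strengthening'' of Theorem~\ref{main theorem 1} (each summand correct to within a relative error $O_G(1/\log\log x)$) that you never prove and that is not available to be invoked --- in the paper the logical order is the reverse, with Theorem~\ref{main theorem 1} deduced as an easy corollary of Theorem~\ref{main theorem 2}, precisely because (as the paper remarks) Theorem~\ref{main theorem 1} is hard to prove directly when~$G$ has several gathered summands attached to the same prime. So your plan assumes something at least as strong as the statement to be proved, and the step where the real work lies is left unaddressed. Concretely, $S(x;G)$ is not a priori a sum of the quantities $S(x;\Z_{p^\alpha}^k)$ over gathered summands: a multiplicative group can contain each gathered summand separately without containing~$G$ (e.g.\ $\Z_{16}^\times$ contains $\Z_4$ and $\Z_2^2$ but not $\Z_4\times\Z_2^2$), so one must control the interactions. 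The paper does this by splitting~$G$ into Sylow subgroups; handling a single $p$-group via the sandwich $S(x;\Z_{p^{\alpha_\ell}}^{k_\ell}) \le S(x;H) \le S(x;\Z_{p^{\alpha_\ell-1}}^{m}) + S(x;\Z_{p^{\alpha_\ell}}^{k_\ell})$ (Lemma~\ref{S(x;H)}), which is where the same-prime difficulty is defused; and, in the two-dominant-summand case, upgrading the union bound to an asymptotic formula by inclusion--exclusion, which requires showing that $S(x;H_1,H_2)$ (integers whose multiplicative group avoids both) is of lower order --- this needs the count of integers with at most $m_1+m_2-2$ prime factors outside a set of residue classes modulo $p_1^{\beta_1}p_2^{\beta_2}$ (Lemmas~\ref{C(x,T) new} and~\ref{S(x;H1H2)}). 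None of this bookkeeping is ``naturally produced'' by citing Selberg--Delange for each summand separately; without it your claim that the two dominant leading constants ``simply add'' is unjustified.

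Your elementary observation that at most two prime powers share a $\phi$-value (one of them a genuine prime) is correct and matches Lemma~\ref{puzzle}, and your absorption of non-dominant terms into the error would be fine \emph{if} the per-summand quantitative formula were in hand; but establishing that formula for a general~$G$ is exactly the content of the paper's Sections~\ref{1 GS sec} and~\ref{general section}, not a corollary of Theorem~\ref{main theorem 1}.
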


\begin{remark} \label{dominant remark}
It turns out (as we show in Lemma~\ref{puzzle} below) that~$G$ can have at most two dominant summands, because at most two summands $\Z_{p^\alpha}^k$ and $\Z_{q^\beta}^k$ can be ``equal'' with respect to the total preorder~$\preccurlyeq$ from Definition~\ref{dominant def} (this occurs when $\phi(p^\alpha) = \phi(q^\beta)$, although the values of~$k$ must be equal in such cases). Therefore the statement of Theorem~\ref{main theorem 2} does exhaust all possible cases (when combined with Theorem~\ref{main theorem 0}).
\end{remark}

\begin{remark}
Note that the power of $\log x$ in equation~\eqref{two dominant summands} is the same no matter which dominant summand is chosen, since in this case $1 / \phi(p^\alpha) = 1 / \phi(q ^ \beta)$ by the definition of the ordering~$\preccurlyeq$.
\end{remark}

\begin{remark}
It might seem that we would prove Theorem~\ref{main theorem 1} first, using some sort of divide-and-conquer strategy, and then deduce Theorem~\ref{main theorem 2} from it. However, Theorem~\ref{main theorem 1} is difficult to prove directly in the case where~$G$ has multiple gathered summands corresponding to the same prime~$p$. Consequently, our actual strategy is to establish Theorem~\ref{main theorem 2} as our main goal; it is easy to verify, using only the definitions given so far, that Theorem~\ref{main theorem 1} does follow immediately from Theorem~\ref{main theorem 2}.
\end{remark}

The methods used in this paper are extensions of the methods used by Chang and the second author~\cite{chang} and by Downey and the second author~\cite{downey} when counting the number of multiplicative groups with a particular least invariant factor or a particular $p$-Sylow subgroup, respectively. Section~\ref{tech sec}, which contains many technical lemmas about weighted sums over primes and related counting functions, has the most in common with these prior papers. In Section~\ref{1 GS sec} we establish an asymptotic formula for the counting function $S(x;G)$ from equation~\eqref{SxG def} in the important special case when~$G$ is a single gathered summand~$\Z_{p^\alpha}^k$, in particular proving Theorem~\ref{main theorem 0} therein. Finally, in Section~\ref{general section} we complete the proof of Theorem~\ref{main theorem 2} in full generality.

Throughout this paper,~$p$ and~$q$ are used exclusively to denote primes. Furthermore,~$G$ always denotes a finite abelian group, while~$H$ always denotes a finite abelian $p$-group (a group whose cardinality is a power of~$p$). Furthermore, if~$G$ has the decomposition~\eqref{generic G decomp} as a direct product of its gathered summands, then an implied constant depending on~$G$ means that it may depend on some or all of the quantities $p^\alpha$ and~$k$ appearing in those gathered summands, as well as on the number of gathered summands.

\section{Technical lemmas} \label{tech sec}

In this section we invest in some technical results to prepare ourselves for the main proofs to come, beginning with two preliminary estimates for general functions.  In Section~\ref{downey stuff} we treat several sums over primes in arithmetic progressions that are closely parallel to the methods of~\cite{downey}; in Section~\ref{chang stuff} we import and apply an asymptotic formula from~\cite{chang} for the counting function of integers with prime factors satisfying certain constraints.

To avoid annoying technicalities when~$x$ is small, we use throughout the notation
\begin{equation}
\log_2 x = \log\log \bigl( \max\{x,3\} \bigr).
\end{equation}

\begin{lemma} \label{log estimate}
Let $k \geq 0$ and $\gamma > 0$ be real numbers.
For any real numbers $n\ge2$ and $x\ge n^2$,
\begin{equation*}
\frac{(\log_2 (x/n))^k}{(\log (x/n))^\gamma} =
\frac{(\log_2 x)^k}{(\log x)^\gamma}
\biggl( 1 + O_{ k, \gamma}
\biggl( \frac{\log n}{\log x} \biggr) \biggr)
\ll_{ k, \gamma} \frac{(\log_2 x)^k}{(\log x)^\gamma}.
\end{equation*}
\end{lemma}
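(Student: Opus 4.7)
The plan is to isolate the single small parameter $u := \log n/\log x$ and reduce everything to Taylor expansions around $u=0$. The hypothesis $x \geq n^2$ is crucial because it forces $u \in (0, 1/2]$, so both $(1-u)^{-\gamma}$ and $\log(1-u)$ stay inside a compact range on which they are well-approximated by their linear parts.

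First I would write $\log(x/n) = \log x - \log n = (1-u)\log x$ and handle the two factors of the expression separately. For the denominator,
\[
\frac{(\log x)^\gamma}{(\log(x/n))^\gamma} = (1-u)^{-\gamma} = 1 + O_\gamma(u)
\]
uniformly on $[0,1/2]$, since the derivative $\gamma(1-u)^{-\gamma-1}$ is bounded there by a constant depending only on $\gamma$. For the numerator, assuming first that $x/n \geq 3$ so the $\max\{\,\cdot\,,3\}$ convention is inactive,
\[
\log_2(x/n) = \log\bigl((1-u)\log x\bigr) = \log_2 x + \log(1-u) = \log_2 x + O(u),
\]
using $|\log(1-u)| \leq 2u$ on $[0,1/2]$. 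Because $x \geq n^2 \geq 4$, we have $\log_2 x \geq \log\log 4$, a positive absolute constant, so dividing by $\log_2 x$ yields $\log_2(x/n)/\log_2 x = 1 + O(u)$, and raising to the $k$-th power gives $(\log_2(x/n))^k/(\log_2 x)^k = 1 + O_k(u)$.

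Multiplying the two expansions then produces
\[
\frac{(\log_2(x/n))^k}{(\log(x/n))^\gamma} = \frac{(\log_2 x)^k}{(\log x)^\gamma}\bigl(1+O_k(u)\bigr)\bigl(1+O_\gamma(u)\bigr) = \frac{(\log_2 x)^k}{(\log x)^\gamma}\bigl(1+O_{k,\gamma}(u)\bigr),
\]
which is exactly the first claim upon substituting $u = \log n/\log x$. The $\ll_{k,\gamma}$ bound then follows automatically because $u \leq 1/2$ keeps the $(1+O_{k,\gamma}(u))$ factor bounded above.

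The residual case $x/n < 3$ forces $n = 2$ and $x \in [4,6)$, a compact region in which $\log x$, $\log(x/n)$, $\log_2 x$, and $\log_2(x/n)$ are all bounded between positive constants, so both sides of the asymptotic formula are bounded and the error term $O_{k,\gamma}(\log n/\log x)$ (itself bounded below) absorbs any discrepancy. There is no real obstacle in this lemma; the only mildly delicate point is tracking that the implied constants depend only on $k$ and $\gamma$, which is ensured by the uniform bound $u \leq 1/2$ and the absolute lower bound on $\log_2 x$.
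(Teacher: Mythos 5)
Your proof is correct and follows essentially the same route as the paper's: expand $\log(x/n)=(1-u)\log x$ and $\log_2(x/n)=\log_2 x+\log(1-u)$ with $u=\log n/\log x\le\tfrac12$, and dispose of the range $x/n<3$ by noting all quantities are bounded there. (Only a cosmetic slip: since $n$ is real, $x/n<3$ forces $n\in[2,3)$ and $x\in[4,9)$ rather than exactly $n=2$, which does not affect the boundedness argument.)
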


\begin{proof}
Since $(\log n)/\log x \le \frac12$, the lemma follows immediately from the approximations
\begin{align*}
\log(x/n) &= \log x \cdot \biggl( 1 - \frac{\log n}{\log x} \biggr) \\
\log\log(x/n) &= \log\log x + \log \biggl( 1 - \frac{\log n}{\log x} \biggr) \\
&= \log\log x + O \biggl( \frac{\log n}{\log x} \biggr) = \log\log x \cdot \biggl( 1 + O \biggl( \frac{\log n}{\log x\log\log x} \biggr) \biggr)
\end{align*}
when $\frac xn\ge3$ (and the lemma is trivial otherwise since all expressions are bounded).
\end{proof}

\begin{lemma} \label{function bound}
Let $k\ge0$ and $\gamma>0$ be real numbers.
Let $f(t)$ be a function satisfying $0\le f(t)\le t$ for $t\ge0$.
Suppose further that $f(t) \ll_{k, \gamma} t (\log_2 t)^k/(\log t)^\gamma$ for $t\ge3$.
Then for any real numbers $x\ge2$ and $m\ge2$,
\begin{align*}
\sum_{\ell = 0}^{\infty}
f(x/m^\ell)
\ll_{k, \gamma}
\frac{x (\log_2 x)^k}{(\log x)^\gamma}.
\end{align*}
\end{lemma}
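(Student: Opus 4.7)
The plan is to split the sum at the natural threshold $m^\ell \asymp \sqrt{x}$, applying the refined bound on~$f$ to the ``head'' terms and the trivial bound $f(t)\le t$ to the ``tail'' terms.

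First I would handle the head: for each $\ell\ge0$ with $m^\ell \le \sqrt{x}$ (so in particular $x\ge m^{2\ell}$), the hypothesis gives
\[
f(x/m^\ell) \ll_{k,\gamma} \frac{x}{m^\ell}\cdot\frac{\bigl(\log_2(x/m^\ell)\bigr)^k}{\bigl(\log(x/m^\ell)\bigr)^\gamma},
\]
and I can apply Lemma~\ref{log estimate} with $n=m^\ell$ (valid as soon as $m^\ell\ge2$; the term $\ell=0$ uses the hypothesis on $f$ directly) to replace the last factor by $(\log_2 x)^k/(\log x)^\gamma$ up to an acceptable multiplicative constant. Summing over all $\ell\ge0$ then gives
\[
\sum_{\ell:\,m^\ell\le\sqrt{x}} f(x/m^\ell) \ll_{k,\gamma} \frac{(\log_2 x)^k}{(\log x)^\gamma}\sum_{\ell=0}^\infty \frac{x}{m^\ell} \ll \frac{x(\log_2 x)^k}{(\log x)^\gamma},
\]
since the geometric series converges with ratio $1/m\le\tfrac12$.

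Next I would handle the tail: for those $\ell$ with $m^\ell>\sqrt{x}$, I use only $f(t)\le t$, so
\[
\sum_{\ell:\,m^\ell>\sqrt{x}} f(x/m^\ell) \le \sum_{\ell:\,m^\ell>\sqrt{x}} \frac{x}{m^\ell}.
\]
The first term of this geometric tail is at most $\sqrt{x}$, and the common ratio is $1/m\le\tfrac12$, so the whole tail is $\le 2\sqrt{x}$. Since $\sqrt{x}\ll_{k,\gamma} x(\log_2 x)^k/(\log x)^\gamma$ for $x$ larger than some constant depending on $k$ and $\gamma$ (and both sides are $O_{k,\gamma}(1)$ for $x$ below that constant), this tail contribution is absorbed into the stated bound.

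There is no real obstacle here; the only mild care needed is (i) ensuring the hypothesis $x\ge n^2$ of Lemma~\ref{log estimate} is exactly what the split condition $m^\ell\le\sqrt{x}$ supplies, and (ii) checking that the $\ell=0$ term and the small-$x$ regime do not cause trouble (they don't, since $f(t)\le t$ handles both cases and all quantities are bounded when $x$ is bounded). Combining the head and tail estimates yields the lemma.
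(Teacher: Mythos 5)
Your proposal is correct and follows essentially the same route as the paper: split the sum at $m^\ell \le \sqrt{x}$, use the hypothesis on $f$ together with Lemma~\ref{log estimate} and a geometric series for the head, and the trivial bound $f(t)\le t$ giving a tail of size $O(\sqrt{x})$, which is absorbed into the stated bound.
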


\begin{proof}
We split the sum into two ranges depending on the relative sizes of $x/m^\ell$ and $\sqrt x$. In the first range, $x/m^\ell \ge m^\ell \ge2$ and thus
\begin{align*}
\sum_{0\le \ell \leq (\log\sqrt x)/\log m} f(x/m^\ell)
&\ll_{k, \gamma}
\sum_{0\le \ell \leq (\log\sqrt x)/\log m} \frac{x (\log_2 (x/m^{\ell}))^k}{m^\ell (\log (x/m^{\ell}))^\gamma} \\
&\ll_{k, \gamma}
\frac{x (\log_2 x)^k}{ (\log x)^\gamma}
\sum_{0\le \ell \leq (\log\sqrt x)/\log m} \frac{1}{m^\ell}
\end{align*}
by Lemma~\ref{log estimate}, and this last geometric series is at most~$2$.
In the second range,
\begin{align*}
\sum_{\ell > (\log\sqrt x) / \log m } f(x/m^\ell)
\leq
\sum_{\ell > (\log\sqrt x) / \log m } \frac{x}{m^\ell}
\ll \frac{x}{m^{(\log\sqrt x) / \log m}} = \sqrt x,
\end{align*}
which completes the proof.
\end{proof}

\subsection{Sums over primes in arithmetic progressions} \label{downey stuff}

The results in this section are direct adaptations of the methods used in the paper~\cite{downey} by Downey and the second author. We give a full proof of the first such lemma, after which we describe the precise relationship between our lemma and the analogous result in~\cite{downey}. For the remaining lemmas, we simply indicate which result in~\cite{downey} is analogous without reproducing the proofs.

We use the notation $\N = \{1,2,\dots\}$ and $\N_0 = \N\cup\{0\}$; in this section $\alpha \in \N_0$ will always be a nonnegative integer. We also use $p^\alpha \parallel n$ to mean that $p^\alpha \mid n$ but $p^{\alpha+1} \nmid n$. Note that when $\alpha=0$, the condition $q\equiv1\mod{p^0}$ is true for all primes~$q$.

\begin{lemma} \label{lower range}
Let $\gamma > 0$ such that $\gamma \notin \N$, let $p$ be prime, and let $\alpha \in \N_0$.
Then for $y \geq 2$,
\begin{equation*}
\sum_{\substack{q \leq \sqrt {y} \\ q \equiv 1 \mod{p^\alpha}}} \frac{1}{q (\log (y/q))^\gamma} = \frac{\log_2 y}{\phi (p^\alpha) (\log y)^\gamma} + O_\gamma \biggl (\frac{1}{(\log y)^\gamma} \biggl ).
\end{equation*}
\end{lemma}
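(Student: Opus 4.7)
The plan is to reduce to the classical Mertens theorem for primes in arithmetic progressions by Taylor-expanding the weight $(\log(y/q))^{-\gamma}$ around $(\log y)^{-\gamma}$. The restriction $q\le\sqrt y$ is exactly what makes this expansion uniformly valid.

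First I would write $\log(y/q)=\log y\cdot(1-u)$ with $u=\log q/\log y\in[0,\tfrac12]$, so that
\[
\frac{1}{(\log(y/q))^\gamma}
=\frac{1}{(\log y)^\gamma}\bigl(1-u\bigr)^{-\gamma}
=\frac{1}{(\log y)^\gamma}\Bigl(1+O_\gamma(u)\Bigr),
\]
where the $O_\gamma(u)$ bound holds uniformly for $u\le\tfrac12$ by the mean-value theorem applied to $(1-u)^{-\gamma}$. Substituting this back gives the decomposition
\[
\sum_{\substack{q\le\sqrt y\\ q\equiv1\mod{p^\alpha}}}\frac{1}{q(\log(y/q))^\gamma}
=\frac{1}{(\log y)^\gamma}\sum_{\substack{q\le\sqrt y\\ q\equiv1\mod{p^\alpha}}}\frac{1}{q}
+O_\gamma\Biggl(\frac{1}{(\log y)^{\gamma+1}}\sum_{q\le\sqrt y}\frac{\log q}{q}\Biggr).
\]

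Next I would evaluate the main term using Mertens' theorem for primes in arithmetic progressions,
\[
\sum_{\substack{q\le t\\ q\equiv1\mod{p^\alpha}}}\frac1q
=\frac{\log_2 t}{\phi(p^\alpha)}+O(1),
\]
specialized to $t=\sqrt y$; since $\log_2\sqrt y=\log_2 y+O(1)$, this contributes the claimed main term $\log_2 y/\phi(p^\alpha)(\log y)^\gamma$ together with an error of size $O(1/(\log y)^\gamma)$. For the error sum I would invoke the classical estimate $\sum_{q\le\sqrt y}(\log q)/q=\tfrac12\log y+O(1)$, which bounds the $O_\gamma$ remainder by $O_\gamma(1/(\log y)^\gamma)$. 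A brief aside handles the degenerate range $y<4$, where both sides are trivially $O(1)$.

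I do not expect a real obstacle here: the argument is bookkeeping around a single Taylor expansion and an application of Mertens in arithmetic progressions. The only mild subtlety is to confirm that the implicit constant in $(1-u)^{-\gamma}=1+O_\gamma(u)$ really is uniform on $[0,\tfrac12]$ (which follows since the derivative $\gamma(1-u)^{-\gamma-1}$ is bounded there by a constant depending only on $\gamma$), so that one never loses more than a factor $\log q/\log y$ when passing from $\log(y/q)$ to $\log y$. The hypothesis $\gamma\notin\mathbb N$ plays no role in this particular lemma and is presumably recorded here for consistency with the hypotheses needed in the later results that invoke it.
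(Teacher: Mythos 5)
Your argument is correct, but it takes a genuinely different route from the paper. The paper's proof is a partial-summation argument: it writes the sum as a Riemann--Stieltjes integral against $M(u)=\sum_{q\le u,\ q\equiv1\pmod{p^\alpha}}1/q$, decomposes $M$ into its Mertens main term $(\log_2 u)/\phi(p^\alpha)+c_{p^\alpha}$ plus a remainder $R(u)\ll 1/\log u$, evaluates the resulting main integral $\frac{1}{\phi(p^\alpha)}\int_3^{\sqrt y}\frac{du}{u\log u\,(\log(y/u))^\gamma}$ by citing an integral estimate from Downey--Martin, and controls the $R$-contribution by integration by parts. You instead expand the weight pointwise, $(\log(y/q))^{-\gamma}=(\log y)^{-\gamma}\bigl(1+O_\gamma(\log q/\log y)\bigr)$ for $q\le\sqrt y$, which reduces the whole lemma to Mertens in arithmetic progressions together with $\sum_{q\le\sqrt y}(\log q)/q\ll\log y$; this is more elementary and self-contained (no auxiliary integral lemma, no integration by parts), and your observation that $\gamma\notin\N$ is not needed here is consistent with the paper's proof as well. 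One small point deserves a remark: in your version the Mertens constant $c_{p^\alpha}$ (and the implied constant in your ``$+O(1)$'') lands in the error term, so to obtain literally the stated $O_\gamma$, uniform in $p^\alpha$, you would need to note that these quantities are bounded uniformly in the modulus (true, but not part of the bare statement of Mertens for progressions); the Stieltjes formulation removes $c_{p^\alpha}$ automatically since its differential vanishes, although its own bound $R(u)\ll1/\log u$ carries the same uniformity question. Since the paper only ever applies the lemma with $p^\alpha$ fixed and all later implied constants are allowed to depend on $p$ and $\alpha$, this is a cosmetic issue rather than a gap.
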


\begin{proof}
We may assume $y\ge9$ since the lemma is trivial for smaller~$y$. We may also absorb the $q=2$ summand (which is relevant in the case $\alpha=0$) into the error term.
If we define
\begin{equation} \label{M(x)}
M(x) = \sum_{\substack{q \leq x \\ q \equiv 1 \mod{p^\alpha}}} \frac{1}{q},
\end{equation}
it then follows by a well-known result (see for example~\cite[Corollary 4.12]{montgomery}) that there exists a constant $c_{p^\alpha}$ such that $M(x) = (\log_2 x) /\phi(p^\alpha) + c_{p^\alpha} + O(1/\log x)$ for $x \geq 3$ (note that $\log_2 x$ and $\log\log x$ are synonymous in this range).
We can then define
\begin{equation} \label{R(x)}
R(x) = M(x) - (\log_2 x) /\phi(p^\alpha) - c_{p^\alpha},
\end{equation}
so that $R(x) \ll 1/\log x$. It then follows that
\begin{align*}
\sum_{\substack{3\le q \leq \sqrt {y} \\ q \equiv 1 \mod{p^\alpha}}} \frac{1}{q (\log (y/q))^\gamma}
&= \int_{3}^{\sqrt{y}} \frac{1}{(\log (y/u))^\gamma} \,dM(u) \\
&= \int_{3}^{\sqrt{y}} \frac{1}{ (\log (y/u))^\gamma} \,d  \bigl( (\log_2 u) /\phi(p^\alpha) + c_{p^\alpha} + R(u) \bigr) \\
&= \frac{1}{\phi(p^\alpha)} \int_{3}^{\sqrt{y}} \frac{1}{ (\log (y/u))^\gamma} \frac{du}{u \log u} + \int_{3}^{\sqrt{y}} \frac{1}{(\log (y/u))^\gamma} \,dR(u) \\
&= \frac{1}{\phi(p^\alpha)} \biggl ( \frac{\log_2 y}{(\log y)^\gamma} + O_\gamma \biggl(\frac{1}{(\log y)^\gamma}\biggr) \biggr ) + \int_{3}^{\sqrt{y}} \frac{1}{(\log (y/u))^\gamma} \,dR(u),
\end{align*}
where we used~\cite[Lemma~2.11]{downey} to estimate the first integral in the second-to-last line.
Applying integration by parts to the remaining integral yields
\begin{align*}
\int_{3}^{\sqrt{y}} \frac{1}{(\log (y/u))^\gamma} \,dR(u)
&= \frac{R(u)}{(\log (y/u))^\gamma} \biggr |_3^{\sqrt{y}} - \int_3^{\sqrt{y}} R(u) \frac{d}{du} \biggl ( \frac{1}{(\log (y/u))^\gamma} \biggr ) \,du \\
& = \frac{R(\sqrt{y})}{(\log (\sqrt{y}))^\gamma} - \frac{R(3)}{(\log (y/3))^\gamma}
- \int_3^{\sqrt{y}} \frac{\gamma R(u)}{u (\log (y/u))^{\gamma+1}} \,du \\
& \ll_\gamma \frac{1/\log y}{(\log y)^\gamma} + \frac{1}{(\log y)^\gamma}
+ \int_3^{\sqrt{y}} \frac{1 / \log u}{u (\log (y/u))^{\gamma+1} } \,du \\
& \ll_\gamma \frac{1}{(\log y)^\gamma} + \frac{1}{(\log y)^{\gamma+1}} \int_3^{\sqrt{y}} \frac{1}{u \log u} \,du \\
& \ll \frac{1}{(\log y)^\gamma} + \frac{\log_2 y}{(\log y)^{\gamma +1}},
\end{align*}
which completes the proof.
\end{proof}

\begin{remark}
Lemma~\ref{lower range} is virtually the same as~\cite[Lemma~2.12]{downey}, except that our sum runs over primes~$q$ with $q\equiv1\mod{p^\alpha}$ whereas their sum runs over primes~$q$ with $p^\alpha\parallel(q-1)$.
Note that our constant $1/\phi(p^\alpha)$ is the relative density of $\{ q\colon q\equiv1\mod{p^\alpha} \}$ within the primes, while their constant $1/p^\alpha$ is the relative density of $\{ q\colon p^\alpha\parallel(q-1) \}$.
Indeed, we can recover their result immediately from ours, since
\[
\{ q\colon p^\alpha\parallel(q-1) \} = \{ q\colon q\equiv1\mod{p^\alpha} \} \setminus \{ q\colon q\equiv1\mod{p^{\alpha+1}} \}.
\]
Unfortunately, we cannot directly prove our result from theirs, since
\[
\{ q\colon q\equiv1\mod{p^\alpha} \} = \{ q\colon p^\alpha\parallel(q-1) \} \cup \{ q\colon p^{\alpha+1}\parallel(q-1) \} \cup \{ q\colon p^{\alpha+2}\parallel(q-1) \} \cup \cdots,
\]
and handling the resulting infinite sum leads to worse error terms. Nevertheless, other than the fact that we start from equation~\eqref{R(x)} while they start from~\cite[equation~(2.6)]{downey}, the two proofs are identical (up to notational choices).
\end{remark}

\begin{lemma} \label{upper range}
Let $\gamma > 0$ such that $\gamma \notin \N$, let $p$ be prime, and let $\alpha \in \N_0$. Then for $y \geq 9$,
\begin{equation*}
\sum_{\substack{\sqrt{y} < q \leq y/2 \\ q \equiv 1 \mod{p^\alpha}}} \frac{1}{q (\log (y/q))^\gamma} \ll_\gamma \frac{1}{(\log y)^{\min\{1, \gamma\}} }.
\end{equation*}
\end{lemma}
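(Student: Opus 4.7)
The plan is to mirror the partial-summation proof of Lemma~\ref{lower range}. Let $M(u) = \sum_{q\le u,\ q\equiv 1\mod{p^\alpha}} 1/q$, and use the decomposition $M(u) = (\log_2 u)/\phi(p^\alpha) + c_{p^\alpha} + R(u)$ with $R(u) \ll 1/\log u$. Writing the sum as a Stieltjes integral $\int_{\sqrt y}^{y/2} (\log(y/u))^{-\gamma}\,dM(u)$ then splits it into a main integral
\begin{equation*}
\frac{1}{\phi(p^\alpha)} \int_{\sqrt y}^{y/2} \frac{du}{u \log u\,(\log(y/u))^\gamma}
\end{equation*}
and an error integral $\int_{\sqrt y}^{y/2} (\log(y/u))^{-\gamma}\,dR(u)$.

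The crucial observation, which distinguishes this lemma from Lemma~\ref{lower range}, is that in the range $u \in (\sqrt y, y/2]$ we have $\log u \ge (\log y)/2$, so $1/\log u$ can be pulled out of the integrand as a factor $O(1/\log y)$, rather than contributing a $\log_2 y$ after integration. After this, I would substitute $v = \log(y/u)$ (so $dv = -du/u$), which converts the main integral into
\begin{equation*}
\ll \frac{1}{\log y} \int_{\log 2}^{(\log y)/2} \frac{dv}{v^\gamma}.
\end{equation*}
Now I split into two cases according to whether $\gamma<1$ or $\gamma>1$ (the hypothesis $\gamma\notin\N$ is exactly what rules out the borderline $\gamma=1$, which would introduce a $\log_2 y$). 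When $\gamma<1$, the inner integral is $\asymp_\gamma (\log y)^{1-\gamma}$, yielding a bound of $(\log y)^{-\gamma}$. When $\gamma>1$, the inner integral converges at its upper limit and is $O_\gamma(1)$, yielding a bound of $(\log y)^{-1}$. In both cases we obtain $(\log y)^{-\min\{1,\gamma\}}$, as required.

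For the error integral I would apply integration by parts, exactly as in the proof of Lemma~\ref{lower range}, producing boundary terms of size $R(u)/(\log(y/u))^\gamma$ and an integral $\int_{\sqrt y}^{y/2} \gamma R(u)/(u(\log(y/u))^{\gamma+1})\,du$. Bounding $R(u) \ll 1/\log u \ll 1/\log y$ on this range and applying the same substitution $v = \log(y/u)$ turns the remaining integral into $O_\gamma(1/\log y)$, because $\int_{\log 2}^\infty v^{-\gamma-1}\,dv$ converges for any $\gamma>0$. The boundary terms are likewise $O_\gamma(1/\log y)$. I don't anticipate any real obstacle here — the only subtlety is remembering to handle the cases $\gamma<1$ and $\gamma>1$ separately so that the final bound has the exponent $\min\{1,\gamma\}$ rather than a single case-dependent value.
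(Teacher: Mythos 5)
Your proposal is correct, and it follows essentially the same route as the paper, whose own ``proof'' simply defers to the analogous partial-summation argument of~\cite[Lemma~2.14]{downey} based on the same decomposition $M(u) = (\log_2 u)/\phi(p^\alpha) + c_{p^\alpha} + R(u)$ with $R(u) \ll 1/\log u$. Your key observation --- that $\log u \asymp \log y$ throughout $(\sqrt{y}, y/2]$, so the $1/\log u$ factor yields $O(1/\log y)$ and the substitution $v = \log(y/u)$ then gives $(\log y)^{-\gamma}$ or $(\log y)^{-1}$ according as $\gamma<1$ or $\gamma>1$ --- is exactly what makes this range produce the exponent $\min\{1,\gamma\}$, and your handling of the $R$-integral matches the paper's treatment in Lemma~\ref{lower range}.
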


\begin{proof}
This formula is directly analogous to~\cite[Lemma~2.14]{downey}, except that our sum runs over primes~$q$ with $q\equiv1\mod{p^\alpha}$ whereas their sum runs over primes~$q$ with $p^\alpha\parallel(q-1)$.
(We have also chosen to let~$q$ range up to $\frac y2$ in this paper, while they used an upper bound of $\frac y3$.)
The proof is again essentially identical, other than starting from equation~\eqref{R(x)} and using Lemma~\ref{lower range} while they start from~\cite[equation~(2.6)]{downey} and use~\cite[Lemma~2.12]{downey}.
\end{proof}

\begin{lemma} \label{logsum}
Let $k \geq 0$ and $\gamma > 0$ such that $\gamma \notin \N$, let $p$ be prime, and let $\alpha \in \N_0$. Then for $y \geq 2$,
\begin{equation*}
\sum_{\substack{q \leq y/2 \\ q \equiv 1 \mod{p^\alpha}}} \frac{(\log_2 (y/q))^k}{q (\log (y/q))^\gamma}
=
\frac{(\log_2 y)^{k+1}}{\phi (p^\alpha) (\log y)^\gamma} + O_{k, \gamma} \biggl (\frac{(\log_2 y)^k}{(\log y)^{\min\{1, \gamma\}}} \biggl ).
\end{equation*}
\end{lemma}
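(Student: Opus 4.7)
The plan is to split the sum at $q = \sqrt{y}$ and handle the two ranges separately, in each case reducing to one of the two lemmas above with the $(\log_2(y/q))^k$ factor essentially pulled out of the sum.

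For the lower range $q \leq \sqrt{y}$, the hypothesis $y \geq q^2$ of Lemma~\ref{log estimate} holds, giving
\[
\frac{(\log_2(y/q))^k}{(\log(y/q))^\gamma} = \frac{(\log_2 y)^k}{(\log y)^\gamma}\biggl(1 + O_{k,\gamma}\biggl(\frac{\log q}{\log y}\biggr)\biggr).
\]
Substituting and isolating the leading part, the main contribution becomes
\[
\frac{(\log_2 y)^k}{(\log y)^\gamma}\sum_{\substack{q \leq \sqrt{y}\\q \equiv 1 \mod{p^\alpha}}}\frac{1}{q} = \frac{(\log_2 y)^k}{(\log y)^\gamma}\biggl(\frac{\log_2 y}{\phi(p^\alpha)} + O(1)\biggr) = \frac{(\log_2 y)^{k+1}}{\phi(p^\alpha)(\log y)^\gamma} + O\biggl(\frac{(\log_2 y)^k}{(\log y)^\gamma}\biggr),
\]
by the Mertens-type estimate for primes in arithmetic progressions (the same $M(x)$ used in Lemma~\ref{lower range}, evaluated at $x = \sqrt y$). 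The contribution of the $O(\log q / \log y)$ factor is bounded by $(\log_2 y)^k(\log y)^{-\gamma-1}\sum_{q\leq\sqrt y}(\log q)/q \ll (\log_2 y)^k/(\log y)^\gamma$, of the same order as the error above.

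For the upper range $\sqrt{y} < q \leq y/2$, I would use the crude bound $(\log_2(y/q))^k \leq (\log_2 y)^k$ (valid since $y/q \leq y$), factor this constant out of the sum, and apply Lemma~\ref{upper range} directly to obtain a contribution of $O_{k,\gamma}((\log_2 y)^k/(\log y)^{\min\{1,\gamma\}})$. Adding the two ranges and using $\gamma \geq \min\{1,\gamma\}$ to absorb the lower-range error $(\log_2 y)^k/(\log y)^\gamma$ into the larger bound $(\log_2 y)^k/(\log y)^{\min\{1,\gamma\}}$ produces the claimed formula. No single step is really the hard part here---once one commits to splitting at $\sqrt{y}$, so that Lemma~\ref{log estimate} applies cleanly in the lower range and Lemma~\ref{upper range} handles the upper range, the rest is bookkeeping to verify that all error contributions consolidate into the stated bound.
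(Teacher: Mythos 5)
Your proof is correct and is essentially the argument the paper intends: the paper only cites the analogue \cite[Lemma~2.18]{downey}, whose proof is exactly this split at $q=\sqrt{y}$, replacing $(\log_2(y/q))^k$ by $(\log_2 y)^k$ via Lemma~\ref{log estimate} in the lower range and bounding it crudely in the upper range, where Lemma~\ref{upper range} applies. Your only (harmless) deviation is re-deriving the lower-range sum from the Mertens-type estimate $M(\sqrt{y})$ instead of quoting Lemma~\ref{lower range} directly, and your bookkeeping of the cross term $\sum_{q\le\sqrt{y}}(\log q)/q \ll \log y$ is right.
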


\begin{proof}
This formula is directly analogous to~\cite[Lemma~2.18]{downey}, except that again our sum runs over primes~$q$ with $q\equiv1\mod{p^\alpha}$ whereas their sum runs over primes~$q$ with $p^\alpha\parallel(q-1)$. The proof is again otherwise identical.
\end{proof}

\begin{lemma} \label{logsum bound 1}
Let $k \geq 0$ and $ \gamma > 0$ such that $\gamma \notin \N$. Then for $y \geq 2$,
\begin{equation*}
\sum_{q \leq y/2} \frac{(\log_2 (y/q))^k}{q ( \log (y/q))^\gamma} \ll_{k, \gamma} \frac{(\log_2 y)^{k+1}}{(\log y)^{\min \{\gamma, 1 \}}}.
\end{equation*}
\end{lemma}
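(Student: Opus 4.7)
The plan is to deduce this lemma directly from Lemma \ref{logsum} by specializing to the degenerate case $\alpha = 0$. As remarked at the beginning of this section, when $\alpha = 0$ the congruence $q \equiv 1 \pmod{p^0}$ holds vacuously for every prime $q$, and moreover $\phi(p^0) = \phi(1) = 1$. Thus, taking any prime $p$ (e.g.\ $p = 2$) and $\alpha = 0$ in Lemma \ref{logsum} transforms the restricted sum into the unrestricted sum of interest and collapses the constant $1/\phi(p^\alpha)$ to $1$, yielding
\begin{equation*}
\sum_{q \leq y/2} \frac{(\log_2(y/q))^k}{q (\log(y/q))^\gamma}
= \frac{(\log_2 y)^{k+1}}{(\log y)^\gamma}
+ O_{k, \gamma}\biggl( \frac{(\log_2 y)^k}{(\log y)^{\min\{1,\gamma\}}} \biggr).
\end{equation*}

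It then remains only to verify that both terms on the right-hand side are $\ll_{k,\gamma} (\log_2 y)^{k+1}/(\log y)^{\min\{\gamma,1\}}$. For the main term, when $\gamma \le 1$ the denominators agree exactly, and when $\gamma > 1$ one has $(\log y)^\gamma \gg_\gamma \log y$ for $y \ge 2$ (small $y$ being absorbed into the implied constant), so the main term is controlled. For the error term, the denominators already match, and the missing factor of $\log_2 y$ is harmless since $\log_2 y \ge 1$ on the relevant range (with bounded $y$ again absorbed into the constant). Finally, the hypothesis $\gamma \notin \N$ carries over identically from Lemma \ref{logsum}, so no additional restriction arises.

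There is essentially no obstacle: the lemma is merely Lemma \ref{logsum} repackaged as an upper bound that suppresses the leading constant $1/\phi(p^\alpha)$ and folds the main and error terms into a single expression. This is convenient for later applications, since one often needs only an upper bound over all primes (without any congruence restriction) and without tracking the precise constant.
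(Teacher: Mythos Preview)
Your proof is correct and takes essentially the same approach as the paper, which simply states that the bound is an immediate consequence of the $\alpha = 0$ case of Lemma~\ref{logsum}. Your additional justification that both the main term and the error term are dominated by $(\log_2 y)^{k+1}/(\log y)^{\min\{\gamma,1\}}$ is accurate and fills in the details the paper leaves implicit.
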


\begin{proof}
This bound is an immediate consequence of the $\alpha = 0$ case of Lemma~\ref{logsum}.
\end{proof}

\begin{lemma} \label{logsum bound 2}
Let $k \geq 0$. Then for $y \geq 2$,
\begin{equation*}
\sum_{q \leq y/2} \frac{(\log_2 (y/q))^k}{q (\log (y/q))} \ll_k \frac{(\log_2 y)^{k+1}}{\log y}.
\end{equation*}
\end{lemma}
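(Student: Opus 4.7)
This bound is the $\gamma = 1$ case excluded from Lemma~\ref{logsum bound 1}; the expected answer picks up an extra factor of $\log_2 y$ compared to the $\gamma > 1$ case because the integrals underlying Lemmas~\ref{lower range}--\ref{logsum} become logarithmically divergent at $\gamma = 1$.

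My plan is to split the sum at $q = \sqrt{y}$, exactly as in the earlier proofs, and handle each range directly. For the lower range $q \le \sqrt{y}$, Lemma~\ref{log estimate} (applied with $n = q$) gives $(\log_2(y/q))^k/\log(y/q) \ll_k (\log_2 y)^k/\log y$ uniformly, and Mertens's estimate $\sum_{q \le \sqrt{y}} 1/q = \log_2 y + O(1)$ then yields a contribution of $\ll (\log_2 y)^{k+1}/\log y$, matching the desired bound.

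For the upper range $\sqrt{y} < q \le y/2$, since $y/q \le \sqrt{y}$ we have the trivial bound $(\log_2(y/q))^k \ll_k (\log_2 y)^k$, so it suffices to prove $\sum_{\sqrt{y} < q \le y/2} 1/(q \log(y/q)) \ll \log_2 y/\log y$. I would establish this by a dyadic decomposition: partition the range into blocks $q \in (y/2^{j+1}, y/2^j]$ for $1 \le j \le J := \lfloor (\log y)/(2\log 2) \rfloor$, on which $\log(y/q) \asymp j$. On each such block, the condition $y/2^j \ge \sqrt{y}$ forces $\log(y/2^j) \asymp \log y$, and Mertens's theorem gives $\sum_{q \in (y/2^{j+1}, y/2^j]} 1/q \asymp 1/\log y$ uniformly in $j$. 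The contribution of block $j$ is therefore $\ll 1/(j \log y)$, and summing over $j$ produces $(1/\log y) \sum_{j=1}^{J} 1/j \asymp \log_2 y/\log y$, precisely the missing factor.

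The main point to recognize is that the $\log_2 y$ in the final bound comes from the harmonic sum $\sum_{j=1}^{J} 1/j \asymp \log J \asymp \log_2 y$; this is the dyadic incarnation of the logarithmic divergence that forbids the case $\gamma = 1$ in the integral arguments behind Lemma~\ref{logsum}. The dyadic decomposition is simply the cleanest way to extract this extra factor without having to redo the partial-summation machinery used to prove Lemmas~\ref{lower range} and~\ref{upper range}.
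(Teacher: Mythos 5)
Your proof is correct, and its skeleton is the same as the paper's: split at $q=\sqrt{y}$, use Lemma~\ref{log estimate} plus Mertens on the lower range, and pull out $(\log_2 y)^k$ on the upper range. The only real difference is how the remaining upper-range sum $\sum_{\sqrt{y}<q\le y/2} 1/(q\log(y/q))$ is bounded: the paper simply cites Lemma~2.15 of~\cite{downey} for the bound $\ll (\log_2 y)/\log y$, whereas you prove it from scratch via dyadic blocks $q\in(y/2^{j+1},y/2^j]$, on which $\log(y/q)\asymp j$ and Mertens gives $\sum_q 1/q\ll 1/\log y$ (uniformly, since every block endpoint is $\gg\sqrt{y}$), so that the harmonic sum over $j\le J\asymp\log y$ produces exactly the extra factor $\log_2 y$. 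Your argument is a correct, self-contained replacement for the citation, and your diagnosis that the extra $\log_2 y$ is the dyadic manifestation of the logarithmic divergence excluded at $\gamma=1$ in Lemmas~\ref{lower range}--\ref{logsum} is exactly right; what the paper's route buys is brevity and consistency with the other lemmas in Section~\ref{downey stuff}, which are all deliberately offloaded to~\cite{downey}, while yours buys independence from that reference. Two cosmetic points: you only need the upper bound $\ll 1/\log y$ per block, so the claimed ``$\asymp$'' (true for large blocks by the prime number theorem, but irrelevant here) is an unnecessary overclaim; and one should note, as usual, that small $y$ (and the possible slight overshoot of the lowest dyadic block below $\sqrt{y}$) are harmless because all terms are nonnegative and $\log(y/q)\ge\log 2$ on the range of summation.
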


\begin{proof}
The lemma follows by splitting the range of summation and noting that
\begin{align*}
\sum_{q \leq \sqrt{y}} \frac{(\log_2 (y/q))^k}{q (\log (y/q))}
&\ll \frac{(\log_2 y)^k}{\log y} \sum_{q \leq \sqrt{y}} \frac{1}{q}
\ll \frac{(\log_2 y)^{k+1}}{\log y} \\
\sum_{\sqrt{y} < q \leq y/2} \frac{(\log_2 (y/q))^k}{q (\log (y/q))}
&\ll (\log_2 y)^k \sum_{\sqrt{y} < q \leq y/2} \frac{1}{q \log (y/q)}
\ll \frac{(\log_2 y)^{k+1}}{\log y},
\end{align*}
where this last sum is bounded using~\cite[Lemma 2.15]{downey}.
\end{proof}

\begin{lemma} \label{nested logsum}
Let $j \in \N$ and $0 < \gamma < 2$. Then for $y \ge 2$,
\begin{equation*}
\sum_{\substack{p_1 \leq y/2 }} \frac{1}{p_1} \cdots
\sum_{\substack{ p_{j} \leq y/2 p_1 \cdots p_{j-1} }} \frac{1}{p_j} \frac{1}{(\log(y/p_1 \cdots p_j))^\gamma}
\ll_{j, \gamma}
\frac{(\log_2 y)^j}{(\log y)^{\min \{\gamma, 1 \}}}.
\end{equation*}
\end{lemma}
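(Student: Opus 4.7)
The natural approach is induction on $j$. For the base case $j=1$, we need to bound the single sum $\sum_{p_1 \leq y/2} \frac{1}{p_1(\log(y/p_1))^\gamma}$. If $\gamma \neq 1$ then $\gamma \notin \N$ (since $0 < \gamma < 2$), so Lemma~\ref{logsum bound 1} with $k=0$ gives a bound of order $(\log_2 y)/(\log y)^{\min\{\gamma,1\}}$; the excluded case $\gamma = 1$ is handled by Lemma~\ref{logsum bound 2} with the same $k$, producing the same bound.

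For the inductive step, I would fix $p_1$ and observe that the inner $j-1$ sums are exactly an instance of the lemma (for $j-1$) with $y$ replaced by $y' = y/p_1$, since the condition $p_j \le y/(2p_1\cdots p_{j-1})$ becomes $p_j \le y'/(2p_2\cdots p_{j-1})$. Because $p_1 \le y/2$ forces $y' \ge 2$, the inductive hypothesis applies and bounds these inner sums by $(\log_2(y/p_1))^{j-1}/(\log(y/p_1))^{\min\{\gamma,1\}}$ up to a constant depending on $j$ and $\gamma$. The problem then reduces to estimating the single outer sum
\[
\sum_{p_1 \leq y/2} \frac{1}{p_1} \cdot \frac{(\log_2(y/p_1))^{j-1}}{(\log(y/p_1))^{\min\{\gamma,1\}}}.
\]

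This outer sum is again handled by the base-case machinery, but now with $k = j-1$: if $0 < \gamma < 1$ the exponent $\min\{\gamma,1\} = \gamma$ is non-integer, so Lemma~\ref{logsum bound 1} applies; if $\gamma \ge 1$ the exponent equals $1$ and Lemma~\ref{logsum bound 2} applies. In both subcases the output is a constant times $(\log_2 y)^j/(\log y)^{\min\{\gamma,1\}}$, which matches the claimed bound and closes the induction.

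The only delicate point, rather than a genuine obstacle, is bookkeeping the exponent of $\log$: after a single layer of summation it saturates at $\min\{\gamma,1\} \le 1$ and stays there at every subsequent layer, which is precisely why the same two technical lemmas (one for non-integer exponents, one for exponent exactly $1$) continue to suffice. The hypothesis $\gamma < 2$ is used only to guarantee that any non-integer exponent ever handed to Lemma~\ref{logsum bound 1} lies in the permissible range $\gamma \notin \N$.
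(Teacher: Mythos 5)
Your proposal is correct and is essentially the paper's argument: both proofs reduce the nested sum to iterated applications of Lemma~\ref{logsum bound 1} (non-integer exponent) and Lemma~\ref{logsum bound 2} (exponent exactly $1$), with the same bookkeeping that the exponent saturates at $\min\{\gamma,1\}$ after one layer. The only cosmetic difference is that you peel off the outermost variable by rescaling $y\mapsto y/p_1$ and invoking the inductive hypothesis, whereas the paper collapses the sums one at a time starting from the innermost.
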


\begin{proof}
The lemma follows by an inductive application of Lemma~\ref{logsum bound 1}, or Lemma~\ref{logsum bound 2} in the case that $\gamma = 1$, since for any $ i \leq j$ we have
\begin{equation*}
\sum_{p_i \leq y/2 p_1 \cdots p_{i-1}}
\frac{1}{p_i}
\frac{(\log_2 (y/p_1 \cdots p_i))^{j-i}}{(\log (y/p_1 \cdots p_i))^{\gamma}}
\ll_{j, \gamma} \frac{ (\log_2 (y/p_1 \cdots p_{i-1}))^{j-(i-1)}}{(\log (y/p_1 \cdots p_{i-1}))^{\min \{\gamma, 1\}}}. \qedhere
\end{equation*}
\end{proof}

\begin{lemma} \label{nested prime sum}
Let $j \in \N$. Then for $y \geq 3$,
\begin{equation*}
\sum_{\substack{p_1 \leq y/2 }} \cdots
\sum_{\substack{ p_{j-1} \leq y/2 p_1 \cdots p_{j-2} }}
\sum_{\substack{ y/2p_1 \cdots p_{j-1} < p_j \leq y/p_1 \cdots p_{j-1} }}
\sum_{n \leq y/p_1 \cdots p_j} 1
\ll_{j} \frac{y (\log_2 y)^{j-1}}{\log y}.
\end{equation*}
\end{lemma}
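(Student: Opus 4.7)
The plan is to exploit the unusually narrow range of $p_j$—namely $y/(2p_1\cdots p_{j-1}) < p_j \leq y/(p_1\cdots p_{j-1})$—to collapse the innermost two sums down to the shape of a sum treated by Lemma~\ref{nested logsum}. Setting $Y = y/(p_1\cdots p_{j-1})$, the constraint $Y/2 < p_j \leq Y$ forces $Y/p_j \in [1,2)$, so the innermost sum equals $\sum_{n \leq Y/p_j} 1 = \lfloor Y/p_j \rfloor = 1$ exactly. Thus the $n$-sum contributes only a factor of $1$, with no logarithmic loss.

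Next, the sum over $p_j$ in the dyadic interval $(Y/2, Y]$ is bounded by $\pi(Y) - \pi(Y/2) \ll Y/\log Y$ via Chebyshev's estimate, valid once $Y$ exceeds an absolute constant (the bounded-$Y$ cases contribute only $O(1)$ and are absorbed into the final bound). After these two reductions, the original sum is majorized by
\begin{equation*}
y \sum_{p_1 \leq y/2}\frac{1}{p_1}\cdots\sum_{p_{j-1}\leq y/(2p_1\cdots p_{j-2})}\frac{1}{p_{j-1}\log(y/(p_1\cdots p_{j-1}))}.
\end{equation*}
For $j \geq 2$, this is precisely the quantity estimated by Lemma~\ref{nested logsum} with index $j - 1$ and $\gamma = 1$, yielding the required bound $\ll_j y(\log_2 y)^{j-1}/\log y$. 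The base case $j = 1$ (where no outer sums remain) is immediate: the claim reduces to $\sum_{y/2 < p_1 \leq y} 1 \ll y/\log y$, matching the statement since $(\log_2 y)^0 = 1$.

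The main ``obstacle'' here is really just the observation in the first paragraph: the asymmetric upper limit on $p_j$ forces $Y/p_j$ to be of bounded size, so what might have looked like a fresh counting sum over $n$ contributes no extra logarithmic factor. This collapse is what lets the nested prime sum fall cleanly into the framework already developed in Lemma~\ref{nested logsum}; without it, one would have to estimate a genuinely new $j$-fold convolution rather than inheriting from an established $(j-1)$-fold bound.
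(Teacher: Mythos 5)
Your proposal is correct and follows essentially the same route as the paper: the inner sum over $n$ collapses to (at most) one term because $y/2p_1\cdots p_{j-1} < p_j$, the sum over $p_j$ is then bounded by Chebyshev's estimate $\ll (y/p_1\cdots p_{j-1})/\log(y/p_1\cdots p_{j-1})$, and the remaining $(j-1)$-fold sum is handled by Lemma~\ref{nested logsum} with $\gamma=1$. Your explicit treatment of the $j=1$ base case is a harmless (and slightly tidier) addition.
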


\begin{proof}
Since $y/2p_1 \cdots p_{j-1} < p_j$, it follows that the inner sum can have at most one term.
Therefore
\begin{align*}
\sum_{\substack{p_1 \leq y/2 }} \cdots
\sum_{\substack{ y/2p_1 \cdots p_{j-1} < p_j \leq y/p_1 \cdots p_{j-1} }}
\sum_{n \leq y/p_1 \cdots p_j} 1
\le \sum_{\substack{p_1 \leq y/2 }} \cdots
\sum_{\substack{ p_{j-1} \leq y/2 p_1 \cdots p_{j-2} }}
\sum_{\substack{ y/2p_1 \cdots p_{j-1} < p_j \leq y/p_1 \cdots p_{j-1} }} 1& \\
\leq \sum_{\substack{p_1 \leq y/2 }} \cdots
\sum_{\substack{ p_{j-1} \leq y/2 p_1 \cdots p_{j-2} }} \pi (y/p_1 \cdots p_{j-1})
\ll \sum_{\substack{p_1 \leq y/2 }} \cdots
\sum_{\substack{ p_{j-1} \leq y/2 p_1 \cdots p_{j-2} }}
\frac{y/ p_1 \cdots p_{j-1}}{\log (y/ p_1 \cdots p_{j-1})},&
\end{align*}
so the lemma follows by Lemma~\ref{nested logsum}.
\end{proof}

\begin{lemma} \label{big logsum}
Let $k \geq 0$ and $\gamma > 0$ such that $\gamma \notin \N$, let $p_1, \ldots , p_j$ be distinct primes, let $p$ be prime and let $\alpha \in \N$.
Then for $y \ge 2$,
\begin{align*}
\sum_{\substack{q \leq y/2 \\ q \equiv 1 \mod{p^\alpha} \\ q \nmid p_1 \cdots p_j}}
\frac{q + O(1)}{q^2}
\biggl (
\frac{(\log_2 (y/q))^k}{(\log (y/q))^\gamma}
+ O \biggl (
\frac{(\log_2 (y/q))^{k-1}}{(\log (y/q))^{\min{\{ \gamma , 1 \}}}}
\biggr ) \biggr)\\
= \frac{(\log_2 y)^{k+1}}{\phi(p^\alpha)(\log y)^\gamma}
+ O_{j,k,\gamma} \biggl (
\frac{(\log_2 y)^{k}}{(\log y))^{\min{\{ \gamma , 1 \}}}}
\biggr ).
\end{align*}
\end{lemma}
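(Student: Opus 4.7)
The plan is to exploit the algebraic simplification $(q+O(1))/q^2 = 1/q + O(1/q^2)$, reducing the integrand to a leading factor $1/q$ plus a tail that is summable enough to fit in the error. First I would drop the condition $q \nmid p_1\cdots p_j$: because this excludes at most~$j$ primes and the summand is bounded for each fixed~$y$, the excluded terms contribute a quantity depending only on $j,k,\gamma$ (and hence absorbable into the $O$-term, after verifying the crude bound $\ll (\log_2 y)^k/(\log y)^{\min\{\gamma,1\}}$ for each individual summand). This lets me work with the unconstrained-by-$p_i$ version, over which our prior lemmas apply directly.

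Next, for the principal $1/q$ part, I would distribute the bracketed expression and treat the two resulting sums separately. The genuine main term
\[
\sum_{\substack{q \le y/2 \\ q \equiv 1 \mod{p^\alpha}}} \frac{1}{q}\,\frac{(\log_2(y/q))^k}{(\log(y/q))^\gamma}
\]
is exactly the object handled by Lemma~\ref{logsum}, yielding $\frac{(\log_2 y)^{k+1}}{\phi(p^\alpha)(\log y)^\gamma}$ plus an error of the required size. For the error contribution inside the brackets, I would apply Lemma~\ref{logsum bound 1} when $\min\{\gamma,1\}=\gamma<1$ and Lemma~\ref{logsum bound 2} when $\min\{\gamma,1\}=1$ (which is the relevant case since $\gamma\notin\N$), in each case with $k-1$ in place of $k$; this bounds the contribution by $\ll (\log_2 y)^k/(\log y)^{\min\{\gamma,1\}}$, which matches the claimed error term. (For the $k=0$ edge case one verifies that the summand $(\log_2(y/q))^{-1}$ behaves harmlessly via the convention defining $\log_2$.)

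The remaining piece, coming from the $O(1/q^2)$ correction, I would handle by splitting the range at $\sqrt{y}$. In the low range $q\le\sqrt{y}$, Lemma~\ref{log estimate} replaces $(\log_2(y/q))^k/(\log(y/q))^\gamma$ by $(\log_2 y)^k/(\log y)^\gamma$, and the geometric-type series $\sum_q 1/q^2$ converges; this contributes $\ll (\log_2 y)^k/(\log y)^\gamma$. In the high range $\sqrt{y}<q\le y/2$, I would use $1/q^2\le 1/y$ together with the trivial bound $(\log_2(y/q))^k/(\log(y/q))^\gamma \ll (\log_2 y)^k$ and $\pi(y/2)\ll y/\log y$ to get $\ll (\log_2 y)^k/\log y$. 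Both pieces fit within $(\log_2 y)^k/(\log y)^{\min\{\gamma,1\}}$. The main obstacle, as I see it, is not any single estimate but the bookkeeping: the bracketed expression is itself a main-plus-error object, so every cross-term must be separately located on the main-term or error-term side of the target equality, with particular care that the $O(1/q^2)$ contribution — where the summand is not always small near $q=y/2$ — is tamed by the range split rather than by a single global bound.
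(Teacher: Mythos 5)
Your overall route is, in substance, the argument the paper leaves out: the paper's proof of Lemma~\ref{big logsum} is a one-line appeal to the analogous \cite[Proposition~2.20]{downey}, and a self-contained proof has to be assembled exactly as you do --- write $(q+O(1))/q^2 = 1/q + O(1/q^2)$, evaluate the principal sum by Lemma~\ref{logsum}, bound the secondary pieces with Lemmas~\ref{logsum bound 1}/\ref{logsum bound 2} and a split at $\sqrt y$, and absorb the at most $j$ excluded primes. One caution on that last step: your opening justification (``the summand is bounded for each fixed $y$,'' contributing ``a quantity depending only on $j,k,\gamma$'') would not suffice, since an $O_{j,k,\gamma}(1)$ quantity is \emph{not} $O\bigl((\log_2 y)^k/(\log y)^{\min\{\gamma,1\}}\bigr)$; the parenthetical crude per-term bound you then demand is the real content, and it is proved by the same dichotomy you use for the $O(1/q^2)$ tail ($q\le\sqrt y$ via Lemma~\ref{log estimate}, $q>\sqrt y$ via $1/q\le 1/\sqrt y$). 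With that understood, the main term, the $1/q\cdot O(\cdot)$ piece for $k\ge1$, and the $O(1/q^2)$ piece are all handled correctly and uniformly in $p^\alpha$, as required.

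The one step that would fail as written is the $k=0$ (more generally $0\le k<1$) treatment of the inner error factor $(\log_2(y/q))^{k-1}$. Bounding it by $O_k(1)$ via the convention and then invoking Lemma~\ref{logsum bound 1} or~\ref{logsum bound 2} with exponent $0$ gives $\ll (\log_2 y)/(\log y)^{\min\{\gamma,1\}}$, which overshoots the claimed error by a factor $(\log_2 y)^{1-k}$. The repair is the same $\sqrt y$ split: for $q\le\sqrt y$, Lemma~\ref{log estimate} gives $\log_2(y/q)\asymp\log_2 y$, so the negative power genuinely helps and Mertens' bound $\sum_{q\le\sqrt y}1/q\ll\log_2 y$ lands exactly on the target; for $\sqrt y<q\le y/2$, use Lemma~\ref{upper range} (with $\alpha=0$). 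This closes the case $\min\{\gamma,1\}=\gamma<1$, which is the only one the paper ever uses (in Lemma~\ref{D-sum estimate new} one has $\gamma=1/\phi(p^\alpha)\le\tfrac12$). Be aware that in the remaining corner $k=0$, $\gamma>1$ no such repair exists: the upper-range sum $\sum_{\sqrt y<q\le y/2}(q\log(y/q)\log_2(y/q))^{-1}$ is genuinely of size about $(\log\log\log y)/\log y$, so the stated error term cannot be reached there --- a defect inherited from reading the statement for all real $k\ge0$, and one that never affects the paper's applications.
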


\begin{proof}
This formula is again directly analogous to~\cite[Proposition~2.20]{downey}, except that our sum runs over primes~$q$ with $q\equiv1\mod{p^\alpha}$ whereas their sum runs over primes~$q$ with $p^\alpha\parallel(q-1)$.
\end{proof}

\subsection{Integers with constrained prime factors} \label{chang stuff}

We will need asymptotic formulas and estimates for the number of integers with particular constraints (most notably congruence restrictions) on their prime factors. We begin by defining a complicated constant occurring in the main term of such asymptotic formulas.

\begin{definition} \label{zeta constant}
Let $\B$ be a set of~$\tau$ distinct reduced residue classes modulo~$d$, and define
\begin{equation*}
\delta( \B ) =
\lim_{s \to 1^+} \zeta (s) ^{-\tau / \phi(d)}
\prod_{q \in \B}
\biggl (
1 - \frac{1}{q^s}
\biggr ) ^{-1}.
\end{equation*}
The limit exists because the function on the right-hand side has an analytic continuation to a neighbourhood of $s=1$ (see for example~\cite[Lemma~3.2]{chang}).
\end{definition}

\begin{lemma} \label{restricted prime factor sum}
Let $\B$ be a set of~$\tau$ distinct reduced residue classes modulo~$d$, and let $p_1, \ldots, p_m$ be distinct primes not belonging to~$\B$.
Then for $x \geq 2$,
\begin{equation*}
\sum_{\substack{n \leq x \\  (q \mid n \textrm{ and } q \notin \B) \implies q \mid p_1 \cdots p_m}} 1
=
\frac{\delta(\B)}{\Gamma (\tau / \phi (d))} \frac{p_1 \cdots p_m}{\phi(p_1 \cdots p_m)} \frac{x}{(\log x)^{1 - \tau/\phi(d)}}
+ O_{d, m} \biggl ( \frac{x}{(\log x)^{2 - \tau/\phi(d)}} \biggl ).
\end{equation*}
\end{lemma}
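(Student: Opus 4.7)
The plan is to factor each $n$ counted by the sum as $n = n_0 n_1$ with $\gcd(n_0,n_1)=1$, where $n_0$ is built only from primes in $\B$ and $n_1$ is built only from the exceptional primes $p_1,\dots,p_m$. This decomposition is unique because the hypothesis $p_i \notin \B$ keeps the two prime sets disjoint. Consequently the left-hand side becomes
\[
\sum_{\substack{n_1 \le x \\ q\mid n_1 \Rightarrow q \in \{p_1,\dots,p_m\}}}
\Phi_\B(x/n_1), \qquad
\Phi_\B(y) := \#\{n\le y \colon q\mid n \Rightarrow q\in\B\}.
\]
The quantity $\Phi_\B$ is exactly of Selberg--Delange type: its Dirichlet series is $\prod_{q\in\B}(1-q^{-s})^{-1}$, which by Definition~\ref{zeta constant} equals $\delta(\B)\zeta(s)^{\tau/\phi(d)}$ times a function analytic and non-vanishing near $s=1$. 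The standard Selberg--Delange theorem (or the version proved in~\cite{chang}) therefore yields
\[
\Phi_\B(y) = \frac{\delta(\B)}{\Gamma(\tau/\phi(d))}\,\frac{y}{(\log y)^{1-\tau/\phi(d)}} + O_d\!\left(\frac{y}{(\log y)^{2-\tau/\phi(d)}}\right).
\]

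Next I would split the $n_1$-sum at $\sqrt{x}$. For $n_1 \le \sqrt x$ one has $x/n_1 \ge n_1^2$, so Lemma~\ref{log estimate} (with $k=0$) gives
\[
\frac{x/n_1}{(\log(x/n_1))^{1-\tau/\phi(d)}}
= \frac{x}{n_1(\log x)^{1-\tau/\phi(d)}}\Bigl(1 + O\bigl(\tfrac{\log n_1}{\log x}\bigr)\Bigr),
\]
and similarly for the error exponent. Summing the main term over the (infinite) set of $\{p_1,\dots,p_m\}$-smooth $n_1$ and using
\[
\sum_{q\mid n_1 \Rightarrow q \in \{p_1,\dots,p_m\}} \frac{1}{n_1}
= \prod_{i=1}^m \Bigl(1-\tfrac{1}{p_i}\Bigr)^{-1} = \frac{p_1\cdots p_m}{\phi(p_1\cdots p_m)}
\]
produces the claimed main term. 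The relative error $\log n_1/\log x$ from Lemma~\ref{log estimate}, summed against the convergent series $\sum \log n_1/n_1$ (which is $O_m(1)$ since only finitely many primes appear), is absorbed into $O(x/(\log x)^{2-\tau/\phi(d)})$, as is the Selberg--Delange error after summing $1/n_1$.

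Finally, I would dispose of the tail $n_1 > \sqrt x$. Using the trivial bound $\Phi_\B(x/n_1) \le x/n_1$, partial summation together with the crude estimate $\#\{n_1 \le T : p_i\text{-smooth}\} \ll_m (\log T)^m$ shows that $\sum_{n_1 > \sqrt x} 1/n_1 \ll_m (\log x)^m / \sqrt{x}$, so the tail contributes $O_m(\sqrt x(\log x)^m)$, which is dominated by the stated error term. The routine but slightly delicate step is matching the relative-error bookkeeping from Lemma~\ref{log estimate} to the Selberg--Delange error; there is no real analytic obstacle, since the only input is the standard Selberg--Delange asymptotic combined with the elementary decomposition $n = n_0 n_1$.
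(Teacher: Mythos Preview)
Your argument is correct (modulo a harmless slip: the condition you need from Lemma~\ref{log estimate} is $x\ge n_1^2$, not $x/n_1\ge n_1^2$, but your splitting point $n_1\le\sqrt x$ is the right one for this). The decomposition $n=n_0n_1$, the Selberg--Delange asymptotic for $\Phi_\B$, the summation over $n_1$ to recover the Euler product $\prod_i(1-1/p_i)^{-1}$, and the tail and relative-error bookkeeping all go through exactly as you describe.

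Your route differs from the paper's. The paper does not carry out any of this decomposition explicitly; it simply invokes \cite[Theorem~3.6]{chang} with $\mathcal I=\{p_1,\dots,p_m\}$ and $\mathcal R=\varnothing$, which applies the Selberg--Delange method directly to the full Euler product $\prod_{q\in\B\cup\{p_1,\dots,p_m\}}(1-q^{-s})^{-1}$ and outputs the stated formula in one stroke. Your approach instead reduces to the $m=0$ case of that same machinery (the asymptotic for $\Phi_\B$) and then inserts the finitely many exceptional primes by hand via the factorisation $n=n_0n_1$. The paper's version is shorter because all the work is hidden in the cited theorem; yours is more transparent, since the factor $p_1\cdots p_m/\phi(p_1\cdots p_m)$ appears visibly as the convergent sum $\sum_{n_1}1/n_1$ rather than emerging from inside a black box. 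Analytically the two arguments are equivalent, as adjoining finitely many Euler factors does not change the singularity type at $s=1$.
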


\begin{proof}
Let $\mathcal{I} = \{ p_1, \ldots, p_m \}$ and $\mathcal{R} = \varnothing$, so that we are counting integers all of whose prime factors belong to $\B \cup \mathcal{I} \backslash \mathcal{R}$.
Since we are allowing our error term depend on~$d$ and~$m$, the statement of~\cite[Theorem 3.6]{chang} simplifies to
\begin{align*}
\sum_{\substack{n \leq x \\  (q \mid n \textrm{ and } q \notin \B) \implies q \mid p_1 \cdots p_m}} 1
&=
\#\{ n \leq x \colon   q \mid n  \implies q \in \B \cup \mathcal{I} \backslash \mathcal{R}  \} \\
&= \frac{x}{(\log x)^{1 - \tau/\phi(d)}} \Bigg ( \frac{\delta(\B)}{\Gamma (\tau / \phi (d))} \prod_{i = 1}^m \biggl( 1-\frac{1}{p_i} \biggl )^{-1}
{}+ O_{d, m} \biggl ( \frac{1}{\log x} \biggl ) \Bigg ),
\end{align*}
which is equivalent to the statement of the lemma. (Their Theorem~3.6 includes the constraint $\log x \gg d^{1/2} \log^2 d$; however, since we allow our $O$-constant to depend on~$d$, we may extend that range all the way down to $x\ge3$.)
\end{proof}

\begin{definition}
Let~$\B$ be any set of positive integers. For any~$m\in\N_0$, define
\begin{equation*}
D_m(x;\B) = \#\bigl\{ N\le x\colon N \text{ has exactly $m$ distinct prime factors that are not in } \B \bigr\}.
\end{equation*}
Note that in particular
\begin{equation} \label{special 0 case}
D_0(x; \B) = \sum_{\substack{n \leq x \\ {q \mid n \implies q \in \B}}} 1.
\end{equation}
\end{definition}

\begin{lemma} \label{most of C(x,T)}
Let~$\B$ be the union of~$\tau$ distinct reduced residue classes modulo~$d$.
Then
\begin{multline} \label{most multline}
D_m(x;\B) =
\frac1{m!} \sum_{\substack{p_1 \leq x/2 \\ p_1 \notin \B}}
\sum_{\substack{p_2 \leq x/2p_1 \\ p_2 \notin \B \\ p_2 \neq p_1}}
\cdots
\sum_{\substack{ p_m \leq x /2 p_1 \cdots p_{m-1} \\ p_m \notin \B \\ p_m \nmid p_1 \cdots p_{m-1}}} \sum_{\substack{n \leq x/p_1 \cdots p_m \\ (q \mid n \textrm{ and } q \notin \B) \implies q \mid p_1 \cdots p_m}} 1 \\
+ O_{d,m} \biggl( \frac{x (\log_2 x)^{m-1}}{(\log x)^{1 - \tau / \phi(d)}} \biggr).
\end{multline}
\end{lemma}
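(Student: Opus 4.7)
The plan is to parametrize each $N$ counted by $D_m(x;\B)$ via its ``leading'' prime-factor-outside-$\B$ part. Specifically, any $N$ with exactly $m$ distinct prime divisors outside $\B$ can be written uniquely (up to ordering) as $N = p_1 p_2 \cdots p_m \cdot n$, where $\{p_1, \ldots, p_m\}$ is its set of distinct prime divisors lying outside $\B$ and $n = N/(p_1 \cdots p_m)$ automatically has the property that every prime factor of~$n$ lies in $\B \cup \{p_1, \ldots, p_m\}$. Since each such $N$ gives rise to $m!$ ordered tuples of distinct primes, this bijection yields
\begin{equation*}
D_m(x;\B) = \frac{1}{m!} \sum_{\substack{p_1, \ldots, p_m \text{ distinct} \\ p_i \notin \B \\ p_1 \cdots p_m \le x}} \sum_{\substack{n \le x/(p_1 \cdots p_m) \\ (q \mid n \text{ and } q \notin \B) \implies q \mid p_1 \cdots p_m}} 1.
\end{equation*}

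Next I would observe that the nested constraints $p_i \le x/(2 p_1 \cdots p_{i-1})$ appearing in the statement of the lemma are jointly equivalent to the single inequality $p_1 \cdots p_m \le x/2$: since the partial products $p_1 \cdots p_i$ are strictly increasing in~$i$ (each $p_j \ge 2$), the $i = m$ constraint is the tightest and implies all the earlier ones. Splitting the outer sum according to whether $p_1 \cdots p_m \le x/2$ or $x/2 < p_1 \cdots p_m \le x$, the first piece is precisely the main term on the right-hand side of~\eqref{most multline}, and it only remains to show that the second piece is absorbed into the stated error.

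For this error, I would note that in the range $x/2 < p_1 \cdots p_m \le x$ we have $x/(p_1 \cdots p_m) < 2$, so the inner sum collapses to the single term $n = 1$ (which trivially satisfies the prime-factor condition). Dropping the ``$p_i \notin \B$'' restriction (which only enlarges the sum) and bounding $1 \le \sum_{n \le x/(p_1 \cdots p_m)} 1$ so that the sum can be fed into Lemma~\ref{nested prime sum} with $j = m$ and $y = x$ gives an error bound of $O_m\bigl(x(\log_2 x)^{m-1}/\log x\bigr)$. Since $0 \le \tau/\phi(d) \le 1$, we have $1/\log x \le 1/(\log x)^{1 - \tau/\phi(d)}$, so this is absorbed into the claimed error $O_{d,m}\bigl(x(\log_2 x)^{m-1}/(\log x)^{1 - \tau/\phi(d)}\bigr)$. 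The only real obstacle is the bookkeeping in verifying that the asymmetric prefix constraints collapse to a single product constraint and that the error range lines up exactly with the hypothesis of Lemma~\ref{nested prime sum}; once these are in place, the rest is routine.
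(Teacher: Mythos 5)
Your proposal is correct and takes essentially the same route as the paper: the same exact identity over ordered tuples of the distinct primes outside~$\B$ with the $1/m!$ factor, the same observation that the nested cutoffs amount to $p_1\cdots p_m\le x/2$, and the same appeal to Lemma~\ref{nested prime sum} (with $j=m$, $y=x$) to absorb the tuples with $x/2<p_1\cdots p_m\le x$ into the error term. (The trivial $m=0$ case, immediate from equation~\eqref{special 0 case}, is the only detail left unmentioned.)
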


\begin{proof}
When $m=0$ the claim is immediate from equation~\eqref{special 0 case}, so we may assume $m\ge1$.
We first claim that
\begin{equation} \label{hopeful edit}
D_m(x;\B) =
\frac1{m!} \sum_{\substack{p_1 \leq x \\ p_1 \notin \B}}
\sum_{\substack{p_2 \leq x/p_1 \\ p_2 \notin \B \\ p_2 \neq p_1}}
\cdots
\sum_{\substack{ p_j \leq x / p_1 \cdots p_{j-1} \\ p_j \notin \B \\ p_j \nmid p_1 \cdots p_{j-1}}} \sum_{\substack{n \leq x/p_1 \cdots p_j \\ (q \mid n \textrm{ and } q \notin \B) \implies q \mid p_1 \cdots p_j}} 1.
\end{equation}
To see this, for any~$N$ counted by $D_m(x;\B)$, let $p_1,\dots,p_m$ be the distinct prime factors outside~$\B$ that divide~$N$, and set $n=N/p_1\cdots p_m$; then~$n$ is counted by the innermost sum in equation~\eqref{hopeful edit}, while the factor of $1/m!$ comes from the fact that each distinct set of primes $\{ p_1, ... , p_m \}$ outside of~$\B$ will be counted $m!$ times by the multiple sum.

In equation~\eqref{hopeful edit}, if $p_j > x/2p_1 \cdots p_{j-1}$ for any $1 \leq j \leq m-1$ then the the sum over $p_{j+1} \leq x/p_1 \cdots p_i$ will be empty; consequently we may adjust those ranges of summation to $p_j\le x/2p_1 \cdots p_{j-1}$. We also split the sum over~$p_m$ into two ranges at $x/2p_1\cdots p_{m-1}$, which yields $D_m(x;\B) = \frac1{m!}(J_1 + J_2)$ where we have defined
\begin{align*}
J_1 &=
\sum_{\substack{p_1 \leq x/2 \\ p_1 \notin \B}}
\sum_{\substack{p_2 \leq x/2p_1 \\ p_2 \notin \B \\ p_2 \neq p_1}} \cdots \sum_{\substack{p_m \leq x/2p_1\cdots p_{m-1} \\ p_m \equiv \notin \B \\ p_m \nmid p_1 \cdots p_{m-1}}}
\sum_{\substack{n \leq x/p_1\cdots p_m \\ {(q \mid n \textrm{ and } q \nmid p_1\cdots p_m) \implies q \in \B}}} 1 \\
J_2 &= \sum_{\substack{p_1 \leq x/2 \\ p_1 \notin \B}}
\sum_{\substack{p_2 \leq x/2p_1 \\ p_2 \notin \B \\ p_2 \neq p_1}} \cdots \sum_{\substack{x/2p_1 \cdots p_{m-1} < p_m \leq x/p_1 \cdots p_{m-1} \\ p_m \notin \B \\ p_m \nmid p_1 \cdots p_{m-1}}}
\sum_{\substack{n \leq x/p_1\cdots p_m \\ {(q \mid n \textrm{ and } q \nmid p_1\cdots p_m) \implies q \in \B}}} 1.
\end{align*}
For~$J_2$, we may simply ignore the congruence and divisibility restrictions on the~$p_j$ and on~$n$ so as to apply Lemma~\ref{nested prime sum}, yielding the estimate $J_2 \ll_m x (\log_2 x)^{m-1} / \log x$ that establishes the lemma.
\end{proof}

\begin{lemma} \label{C(x,T) new}
Let~$\B$ be the union of~$\tau$ distinct reduced residue classes modulo~$d$. For any~$k\in\N_0$, the number of integers up to~$x$ that have at most~$k$ distinct prime factors that are not in~$\B$ is
$\ll_{d, k} {x (\log_2 x)^k}/{(\log x)^{1-\tau/\phi(d)}}$.
\end{lemma}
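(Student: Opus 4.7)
The plan is to split the count by the number of prime factors outside $\B$: if $C_k(x;\B)$ denotes the quantity in question, then $C_k(x;\B) = \sum_{m=0}^{k} D_m(x;\B)$, so it suffices to show $D_m(x;\B) \ll_{d,m} x(\log_2 x)^m/(\log x)^{1-\tau/\phi(d)}$ for each $0 \le m \le k$; summing gives the stated bound since the $m=k$ term dominates.

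For each fixed $m$, I would start from Lemma~\ref{most of C(x,T)}, whose error term is already of the required form $O_{d,m}(x(\log_2 x)^{m-1}/(\log x)^{1-\tau/\phi(d)})$. To the innermost count on $n$ I would apply Lemma~\ref{restricted prime factor sum}, which replaces it by a main term of size
\[
\ll \frac{x}{\phi(p_1)\cdots\phi(p_m)\,\bigl(\log(x/p_1\cdots p_m)\bigr)^{1-\tau/\phi(d)}}
\]
plus an error smaller by one factor of $\log x$ (which ultimately contributes a lower-order term). Using $1/\phi(p) = 1/(p-1) \le 2/p$ for $p\ge 3$ and separating off the bounded contribution from $p=2$, the weighted sum becomes comparable to one weighted by $1/(p_1\cdots p_m)$.

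I would then iteratively apply Lemma~\ref{logsum bound 1}, with exponent $\gamma = 1-\tau/\phi(d) \in (0,1)$ (in particular $\gamma\notin\N$), starting from the innermost sum over $p_m$ and working outward. Each application introduces a factor of $\log_2 x$ while preserving the exponent of $\log x$, so after $m$ iterations I obtain the claimed bound. The edge case $\tau = 0$ (no congruence restriction) uses Lemma~\ref{logsum bound 2} with $\gamma=1$ in place of Lemma~\ref{logsum bound 1} at each step; the edge case $\tau = \phi(d)$ makes the exponent $1-\tau/\phi(d)$ equal to zero, and the claim $\ll_{d,k} x(\log_2 x)^k$ is trivial since the count is at most $x$.

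The main obstacle is purely organisational: packaging Lemma~\ref{most of C(x,T)}, Lemma~\ref{restricted prime factor sum}, and the iteration of Lemma~\ref{logsum bound 1} into a clean induction on $m$ while tracking the error terms so that they remain admissible at every stage. The only genuinely new ingredient compared to the nested prime-sum arguments already present in this section (e.g.\ Lemma~\ref{nested prime sum}) is the factor $p_1\cdots p_m/\phi(p_1\cdots p_m)$ produced by Lemma~\ref{restricted prime factor sum}, and this is disposed of by the elementary estimate $1/\phi(p) = 1/p + O(1/p^2)$ together with a trivial handling of the single prime $p=2$.
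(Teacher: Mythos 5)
Your proposal is correct and follows essentially the same route as the paper: sum $D_m(x;\B)$ over $0\le m\le k$, expand via Lemma~\ref{most of C(x,T)}, apply Lemma~\ref{restricted prime factor sum} to the innermost sum (absorbing the bounded factor $p_1\cdots p_m/\phi(p_1\cdots p_m)$ into the implied constant), and then bound the resulting nested prime sums. The only cosmetic difference is that you iterate Lemmas~\ref{logsum bound 1} and~\ref{logsum bound 2} by hand, whereas the paper simply invokes Lemma~\ref{nested logsum}, which packages exactly that iteration.
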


\begin{proof}
For any $0\le m\le k$, we invoke Lemma~\ref{most of C(x,T)} and then apply Lemma~\ref{restricted prime factor sum} to the innermost sum in equation~\eqref{most multline} to obtain
\begin{align*}
D_m(x;\B)
\ll_{d, m}
\sum_{\substack{p_1 \leq x/2 \\ p_1 \notin \B}}
\cdots
\sum_{\substack{ p_m \leq x /2 p_1 \cdots p_{m-1} \\ p_m \notin \B \\ p_m \nmid p_1 \cdots p_{m-1}}}
\frac{x /p_1 \cdots p_m}{(\log (x/p_1 \cdots p_m))^{1 - \tau / \phi(d)}} + \frac{x (\log_2 x)^{m-1}}{(\log x)^{1 - \tau / \phi(d)}}.
\end{align*}
We may now ignore the restrictions that the~$p_i$ are distinct and not in~$\B$ so as to apply Lemma~\ref{nested logsum}, from which we deduce that $D_m(x;\B) \ll_{d,m} x (\log_2 x)^j / (\log x)^{1 - c / \phi(d)}$; the lemma follows upon summing over $0\le m\le k$.
\end{proof}

\begin{definition} \label{D m x p alpha def}
For any~$m\in\N_0$, define
\begin{multline*}
D_m(x;p^\alpha) = \#\bigl\{ N\le x\colon p\nmid N, \\
\text{$N$ has exactly $m$ distinct prime factors congruent to~$1$ modulo $p^\alpha$} \bigr\}.
\end{multline*}
Note that this is almost the same quantity as $D_m(x;\B_{p^\alpha})$, where
\begin{equation} \label{B p alpha def}
\B_{p^\alpha} = \{ c \in \Z_{p^\alpha}^\times \colon c \not\equiv 1 \mod{p^\alpha} \}
\end{equation}
is the union of $\phi(p^\alpha)-1$ reduced residue classes modulo~$p^\alpha$; however, $D_m(x;\B_{p^\alpha})$ counts some integers divisible by~$p$ while $D_m(x;p^\alpha)$ does not. Nevertheless, the proof of Lemma~\ref{most of C(x,T)} goes through virtually unchanged to show that
\begin{multline} \label{most multline p}
D_m(x;p^\alpha) =
\frac1{m!} \sum_{\substack{p_1 \leq x/2 \\ p_1\equiv1\mod{p^\alpha}}}
\sum_{\substack{p_2 \leq x/2p_1 \\ p_2\equiv1\mod{p^\alpha} \\ p_2 \neq p_1}}
\cdots
\sum_{\substack{ p_m \leq x /2 p_1 \cdots p_{m-1} \\ p_m\equiv1\mod{p^\alpha} \\ p_m \nmid p_1 \cdots p_{m-1}}} \sum_{\substack{n \leq x/p_1 \cdots p_m \\ (q \mid n \textrm{ and } q\notin\B_{p^\alpha}) \implies q \mid p_1 \cdots p_m}} 1 \\
+ O_{d,m} \biggl( \frac{x (\log_2 x)^{m-1}}{(\log x)^{1 - \tau / \phi(d)}} \biggr).
\end{multline}
\end{definition}

\begin{lemma} \label{D-sum estimate new}
For any prime power $p^\alpha \geq 3$ and any~$m\in\N_0$,
\begin{align*}
D_m(x;p^\alpha)
=
\frac{\delta(\B_{p^\alpha})/m!}{\Gamma (1 - 1/\phi(p^\alpha))} \frac{x(\log_2 x)^m}{\phi(p^\alpha)^m (\log x)^{1/\phi(p^\alpha)}}
+
O_{p, \alpha , m}
\biggl (
\frac{x(\log_2 x)^{m-1}}{ (\log x)^{1/\phi(p^\alpha)}}
\biggr ).
\end{align*}
In particular, $D_m(x;p^\alpha) \ll_{p, \alpha , m} {x(\log_2 x)^m}/{ (\log x)^{1/\phi(p^\alpha)}}$.
\end{lemma}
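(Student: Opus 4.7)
The plan is to start from the expression for $D_m(x;p^\alpha)$ provided by equation~\eqref{most multline p}, substitute the asymptotic from Lemma~\ref{restricted prime factor sum} into the innermost sum over~$n$, and then evaluate the resulting $m$-fold sum over primes $p_1,\dots,p_m\equiv 1\mod{p^\alpha}$ by iterating Lemma~\ref{big logsum}. Here the relevant set is $\B=\B_{p^\alpha}$ from equation~\eqref{B p alpha def}, consisting of $\tau=\phi(p^\alpha)-1$ reduced residue classes modulo $d=p^\alpha$; consequently $\tau/\phi(d)=1-1/\phi(p^\alpha)$, so the exponent $1-\tau/\phi(d)$ appearing in Lemma~\ref{restricted prime factor sum} agrees with the target exponent $1/\phi(p^\alpha)$. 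The assumption $p^\alpha\ge3$ forces $\phi(p^\alpha)\ge2$, so $\gamma:=1/\phi(p^\alpha)\le\tfrac12$ is never a positive integer, meeting the hypothesis of Lemma~\ref{big logsum}.

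Applying Lemma~\ref{restricted prime factor sum} to the innermost sum with~$x$ replaced by $x/p_1\cdots p_m$ produces a main term proportional to $\prod_{i=1}^{m}\frac{p_i}{\phi(p_i)}\cdot\frac{x/p_1\cdots p_m}{(\log(x/p_1\cdots p_m))^{1/\phi(p^\alpha)}}$, together with an error of order $\frac{x/p_1\cdots p_m}{(\log(x/p_1\cdots p_m))^{1+1/\phi(p^\alpha)}}$. The contribution of this latter error, summed over the outer primes, is controlled by Lemma~\ref{nested logsum} (applicable since $1+1/\phi(p^\alpha)<2$), and yields $O_{p,\alpha,m}\bigl(x(\log_2 x)^m/\log x\bigr)$, which is strictly smaller than the desired error $O\bigl(x(\log_2 x)^{m-1}/(\log x)^{1/\phi(p^\alpha)}\bigr)$ and hence can be absorbed.

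For the main term, I would use the identity $\frac{1}{\phi(p_i)}=\frac{1}{p_i-1}=\frac{p_i+O(1)}{p_i^2}$ to recast the $m$-fold sum in precisely the shape required by Lemma~\ref{big logsum}. The innermost sum (over $p_m$) at $k=0$ produces the main contribution $\frac{\log_2 y}{\phi(p^\alpha)(\log y)^{1/\phi(p^\alpha)}}$ with $y=x/p_1\cdots p_{m-1}$, plus a secondary error. Each subsequent application of Lemma~\ref{big logsum} to the sums over $p_{m-1},\dots,p_1$ increments~$k$ by one and contributes another factor of $\log_2/\phi(p^\alpha)$; after~$m$ iterations the accumulated main term is $(\log_2 x)^m/\bigl(\phi(p^\alpha)^m(\log x)^{1/\phi(p^\alpha)}\bigr)$. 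Combined with the prefactor $\delta(\B_{p^\alpha})/\bigl(m!\,\Gamma(1-1/\phi(p^\alpha))\bigr)$ this matches the claimed main term.

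The main technical obstacle is the bookkeeping of secondary error terms generated at each of the $m$ iterations of Lemma~\ref{big logsum}. Each such error has shape $(\log_2 y)^k/(\log y)^{\min\{\gamma,1\}}=(\log_2 y)^k/(\log y)^{1/\phi(p^\alpha)}$ (since $\gamma<1$), and when propagated through the remaining outer sums via further applications of Lemma~\ref{big logsum} or Lemma~\ref{nested logsum} produces a total contribution of size $O_{p,\alpha,m}\bigl(x(\log_2 x)^{m-1}/(\log x)^{1/\phi(p^\alpha)}\bigr)$, exactly matching the claimed error term. The concluding assertion $D_m(x;p^\alpha)\ll x(\log_2 x)^m/(\log x)^{1/\phi(p^\alpha)}$ then follows at once from the asymptotic formula.
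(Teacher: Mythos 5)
Your proposal follows the paper's own proof essentially step for step: start from equation~\eqref{most multline p}, apply Lemma~\ref{restricted prime factor sum} with $\B=\B_{p^\alpha}$ and $\tau=\phi(p^\alpha)-1$ to the innermost sum, absorb its error via Lemma~\ref{nested logsum}, and evaluate the main term by rewriting $1/\phi(p_i)$ as $(p_i+O(1))/p_i^2$ and iterating Lemma~\ref{big logsum}, whose built-in error shape is exactly what self-propagates through each of the $m$ sums. The argument is correct, and your explicit checks (that $\gamma=1/\phi(p^\alpha)\notin\N$ and that the $x(\log_2 x)^m/\log x$ contribution is dominated by the claimed error since $\phi(p^\alpha)\ge2$) match the paper's reasoning.
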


\begin{proof}
Let~$J_1$ equal the multiple sum in equation~\eqref{most multline p}.
We apply Lemma~\ref{restricted prime factor sum} with $d=p^\alpha$ and $\B=\B_{p^\alpha}$ from equation~\eqref{B p alpha def}, so that $\tau=\phi(p^\alpha)-1$, to the innermost sum to obtain
\begin{align}
J_1 = \frac1{m!}
\sum_{\substack{p_1 \leq x/2 \\ p_1 \equiv 1 \mod{p^\alpha}}}  \cdots
\sum_{\substack{p_m \leq x/2p_1\cdots p_{m-1} \\ p_m \equiv 1 \mod{p^\alpha} \\ p_m \nmid p_1 \cdots p_{m-1}}}
\Bigg (
\frac{\delta(\B_{p^\alpha})}{\Gamma (1 - 1/\phi(p^\alpha))}
\frac{x/p_1 \cdots p_m}{(\log (x/p_1 \cdots p_m))^{1/\phi(p^\alpha)}}
\frac{p_1\cdots p_m}{\phi(p_1\cdots p_m)} \notag \\
{}+ O_{p, \alpha, m} \biggl ( \frac{x/p_1 \cdots p_m}{(\log (x/p_1 \cdots p_m))^{1 + 1/\phi(p^\alpha)}} \biggl )
\Bigg ) \notag \\
= \frac{\delta(\B_{p^\alpha}) x/m!}{\Gamma (1 - 1/\phi(p^\alpha))}
\sum_{\substack{p_1 \leq x/2 \\ p_1 \equiv 1 \mod{p^\alpha}}}
\frac{p_1 + O(1)}{p_1^2}
\cdots
\sum_{\substack{p_m \leq x/2p_1\cdots p_{m-1} \\ p_m \equiv 1 \mod{p^\alpha} \\ p_m \nmid p_1 \cdots p_{m-1}}}
\frac{p_m + O(1)}{p_m^2}
\frac{1}{(\log (x/p_1 \cdots p_m))^{1/\phi(p^\alpha)}} \notag \\
{}+ O_{p, \alpha, m}
\Bigg(
\sum_{\substack{p_1 \leq x/2 \\ p_1 \equiv 1 \mod{p^\alpha}}}
\cdots
\sum_{\substack{p_m \leq x/2p_1\cdots p_{m-1} \\ p_m \equiv 1 \mod{p^\alpha} \\ p_m \nmid p_1 \cdots p_{m-1}}}
\frac{x/p_1 \cdots p_m}{(\log (x/p_1 \cdots p_m))^{1+1/\phi(p^\alpha)}}
\Bigg ) .
\label{J2 main term new}
\end{align}
In the error term, we may again ignore the congruence and distinctness conditions on the~$p_j$ so as to apply Lemma~\ref{nested logsum}, which yields the acceptable error $\ll_{p,\alpha,m} x (\log_2 x)^{m-1} / \log x$.

Regarding the main term on the right-hand side of equation~\eqref{J2 main term new}, Lemma~\ref{big logsum} tells us that for any $i \leq m$,
\begin{align*}
\frac{1}{\phi (p^\alpha)^{m-i}} \sum_{\substack{p_i \leq x/2p_1 \cdots p_{i-1} \\ p_i \equiv 1\mod{p^\alpha}}}
\frac{p_i + O(1)}{p_i^2}
\Bigg (
\frac{(\log_2 (x/p_1 \cdots p_i))^{m-i}}{(\log (x/p_1 \cdots p_i)^{1/\phi(p^\alpha)}}& \\
+ O_{p, \alpha, m}
\biggl ( &
\frac{(\log_2 (x/p_1 \cdots p_i))^{m-i-1}}{(\log (x/p_1 \cdots p_i)^{1/\phi(p^\alpha)}}
\biggr )
\Bigg) \\
=
\frac{1}{\phi (p^\alpha)^{m-(i-1)}}
\frac{(\log_2 (x/p_1 \cdots p_{i-1}))^{m-(i-1)}}{(\log (x/p_1 \cdots p_{i-1})^{1/\phi(p^\alpha)}}
+ O_{p, \alpha, m}
\biggl (&
\frac{(\log_2 (x/p_1 \cdots p_{i-1}))^{m-i}}{(\log (x/p_1 \cdots p_{i-1})^{1/\phi(p^\alpha)}}
\biggr ).
\end{align*}
This observation allows us to iteratively evaluate the innermost sum one at a time, and it follows inductively in this way that
\begin{align*}
\sum_{\substack{p_1 \leq x/2 \\ p_1 \equiv 1 \mod{p^\alpha}}}
\frac{p_1 + O(1)}{p_1^2}
\cdots
\sum_{\substack{p_m \leq x/2p_1\cdots p_{m-1} \\ p_m \equiv 1 \mod{p^\alpha} \\ p_m \nmid p_1 \cdots p_{m-1}}}
\frac{p_m + O(1)}{p_m^2}
\frac{1}{(\log (x/p_1 \cdots p_m))^{1/\phi(p^\alpha)}} \\
=
\frac{(\log_2 x)^m}{\phi(p^\alpha)^m (\log x)^{1/\phi(p^\alpha)}}
+
O_{p, \alpha , m}
\biggl (
\frac{(\log_2 x)^{m-1}}{ (\log x)^{1/\phi(p^\alpha)}}
\biggr ),
\end{align*}
which completes the proof of the lemma when inserted back into equation~\eqref{J2 main term new}.
\end{proof}

\section{Individual gathered summands} \label{1 GS sec}

The estimation of $S(x;\Z_{p^\alpha}^k)$ will be crucial to the estimation of $S(x; G)$ for general finite abelian groups~$G$. It will be helpful to group the integers counted by $S(x;\Z_{p^\alpha}^k)$ according to the power of~$p$ dividing them.

\begin{definition} \label{S_l definition}
For any prime power $p^\alpha$ and any integers $k\ge1$ and $\ell\ge0$, define
\[
S_\ell(x;\Z_{p^\alpha}^k) = \#\bigl\{ n\le x\colon \Z_{p^\alpha}^k \nleq \Z_n^\times,\, p^\ell \parallel n \bigr\}.
\]
This quantity refines $S(x;\Z_{p^\alpha}^k)$, and indeed a comparison to Definition~\ref{SxG def} reveals that
\begin{equation} \label{group by ell}
S(x;\Z_{p^\alpha}^k) = \sum_{\ell=0}^{\infty} S_\ell(x;\Z_{p^\alpha}^k).
\end{equation}
\end{definition}

Fortunately, every quantity $S_\ell(x;\Z_{p^\alpha}^k)$ can be expressed in terms of $S_0(y;\Z_{p^\alpha}^j)$ for various values of~$y$ and~$j$, as we now demonstrate.

\begin{lemma} \label{Sell to S0 lemma}
For any prime power $p^\alpha\ge3$ and any integers $k\ge1$ and $\ell\ge0$,
\[
S_\ell(x;\Z_{p^\alpha}^k) = \begin{cases}
S_0 (x/p^\ell;\Z_{p^\alpha}^k), &\text{if $p$ is odd and } \ell\le\alpha, \\
S_0 (x/p^\ell;\Z_{p^\alpha}^{k-1}), &\text{if $p$ is odd and } \ell\ge\alpha+1, \\
S_0 (x/2^\ell;\Z_{2^\alpha}^k), &\text{if $p=2$ and $\alpha \geq 2$ and } \ell\le\alpha+1, \\
S_0 (x/2^\ell;\Z_{2^\alpha}^{k-1}), &\text{if $p=2$ and $\alpha \geq 2$ and } \ell\ge\alpha+2.
\end{cases}
\]
Furthermore,
$S_1 (x;\Z_2^k) = S_0 (x/2;\Z_2^k)$ and $S_2 (x;\Z_2^k) = S_0 (x/4,\Z_2^{k-1})$, while $S_\ell (x;\Z_2^k) = S_0 (x/2^\ell,\Z_2^{k-2})$ for $\ell\ge3$.
\end{lemma}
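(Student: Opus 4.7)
The plan is to exploit the multiplicative structure of $\Z_n^\times$. If $p^\ell \parallel n$, write $n = p^\ell m$ with $\gcd(p,m)=1$; then the Chinese remainder theorem gives $\Z_n^\times \cong \Z_{p^\ell}^\times \times \Z_m^\times$. Since $\Z_{p^\alpha}^k$ is a finite abelian $p$-group, it embeds into $\Z_n^\times$ if and only if it embeds into the $p$-Sylow subgroup, which splits as the direct product of the $p$-Sylow of $\Z_{p^\ell}^\times$ and the $p$-Sylow of $\Z_m^\times$. Moreover $n \mapsto m = n/p^\ell$ is a bijection between $\{n \leq x : p^\ell \parallel n\}$ and $\{m \leq x/p^\ell : p \nmid m\}$, which will let me translate the count $S_\ell(x;\Z_{p^\alpha}^k)$ into one of the form $S_0(x/p^\ell;\cdot)$.

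The key combinatorial tool is the elementary fact that a finite abelian $p$-group $H$ contains $\Z_{p^\alpha}^k$ as a subgroup if and only if its primary decomposition has at least $k$ cyclic summands $\Z_{p^\beta}$ with $\beta \geq \alpha$; indeed, the $p^\alpha$-torsion subgroup of $H = \prod_i \Z_{p^{\beta_i}}$ is $\prod_i \Z_{p^{\min(\beta_i,\alpha)}}$, from which the criterion is immediate. This count is additive across direct products, so if $j$ denotes the number of cyclic summands of exponent $\geq p^\alpha$ contributed by the $p$-Sylow of $\Z_{p^\ell}^\times$, then $\Z_{p^\alpha}^k \leq \Z_n^\times$ if and only if $\Z_{p^\alpha}^{k-j} \leq \Z_m^\times$.

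The remainder of the proof is a case analysis that computes $j$ from the standard structure of $\Z_{p^\ell}^\times$. For $p$ odd, $\Z_{p^\ell}^\times$ is cyclic of order $p^{\ell-1}(p-1)$ with $p$-Sylow equal to the single summand $\Z_{p^{\ell-1}}$, yielding $j=0$ for $\ell \leq \alpha$ and $j=1$ for $\ell \geq \alpha+1$. For $p=2$ with $\alpha \geq 2$, use $\Z_{2^\ell}^\times \cong \Z_2 \times \Z_{2^{\ell-2}}$ for $\ell \geq 3$ (with trivial or smaller groups for $\ell \leq 2$); the $\Z_2$ summand has exponent $2 < 2^\alpha$ and never counts, while the $\Z_{2^{\ell-2}}$ summand counts precisely when $\ell-2 \geq \alpha$, giving $j=0$ for $\ell \leq \alpha+1$ and $j=1$ for $\ell \geq \alpha+2$. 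For $p=2$ with $\alpha=1$, every nontrivial cyclic summand of $\Z_{2^\ell}^\times$ contributes, so $j=0$ for $\ell \leq 1$, $j=1$ for $\ell=2$, and $j=2$ for $\ell \geq 3$.

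Substituting each value of $j$ into the identity $S_\ell(x;\Z_{p^\alpha}^k) = S_0(x/p^\ell;\Z_{p^\alpha}^{k-j})$ furnished by the bijection $n = p^\ell m$ recovers all four cases of the main formula and all three cases of the furthermore clause. There is no serious obstacle; the only subtlety is the low-$\ell$ behavior of $\Z_{2^\ell}^\times$ (namely that it only starts having two primary summands at $\ell=3$), which is exactly why the $p^\alpha=2$ situation must be separated from the case $p^\alpha \geq 3$ in the lemma's statement.
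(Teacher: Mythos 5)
Your proposal is correct and follows essentially the same route as the paper: factor $n = p^\ell m$ with $p\nmid m$, apply the Chinese remainder theorem, and read off from the standard structure of $\Z_{p^\ell}^\times$ how many copies of $\Z_{p^\alpha}$ it can supply, reducing each case to $S_0(x/p^\ell;\Z_{p^\alpha}^{k-j})$. Your explicit statement of the subgroup criterion via the $p^\alpha$-torsion subgroup is just a slightly more formal rendering of the counting of summands that the paper's proof uses implicitly, so there is nothing substantively different to compare.
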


\begin{proof}
Every integer counted by $S_\ell(x;\Z_{p^\alpha}^k)$ can be written as $n=p^\ell r$ where $r\le x/p^\ell$ and $p\nmid r$; as a result, $\Z_n^\times \cong \Z_{p^\ell}^\times \times \Z_r^\times$.

If~$p$ is odd, then $\Z_{p^\ell}^\times \cong \Z_{p-1} \times \Z_{p^{\ell-1}}$, which contains a copy of $\Z_{p^\alpha}$ if and only if $\ell\ge\alpha+1$. Therefore $\Z_r^\times$ cannot contain~$k$ copies of $\Z_{p^\alpha}$ if $\ell\le\alpha$, and cannot contain $k-1$ copies of $\Z_{p^\alpha}$ if $\ell\ge\alpha+1$.

If $p=2$, then $\Z_{2^1}^\times \cong \Z_1$ and $\Z_{2^2}^\times \cong \Z_2$, while $\Z_{2^\ell}^\times \cong \Z_2 \times \Z_{2^{\ell-2}}$ for $\ell\ge3$. When $\alpha\ge2$, these groups contain a copy of $\Z_{2^\alpha}$ if and only if $\ell\ge\alpha+2$. Therefore $\Z_r^\times$ cannot contain~$k$ copies of $\Z_{2^\alpha}$ if $\ell\le\alpha+1$, and cannot contain $k-1$ copies of $\Z_{2^\alpha}$ if $\ell\ge\alpha+2$. The identities for $S_\ell(x;\Z_2^k)$ follow similarly by noting that $\Z_{2^\ell}^\times$ contains~$0$, $1$, or~$2$ copies of $\Z_2$ depending on whether $\ell=1$, $\ell=2$, or $\ell\ge3$.
\end{proof}

Thanks to Lemma~\ref{Sell to S0 lemma}, we can concentrate solely on $S_0(x;\Z_{p^\alpha}^k)$ to understand the behaviour of $S(x;\Z_{p^\alpha}^k)$. It turns out that the case $p^\alpha=2$ is special, but also straightforward, so we handle that case first.

\subsection{The special case of $\Z_2^k$}
\label{proof of main theorem 0}

In this section, we prove Theorem~\ref{main theorem 0}.
The reason that the case $p^\alpha=2$ is different from other cases is that {\em every} odd prime dividing~$n$ produces a primary factor~$\Z_2$ in the multiplicative group~$\Z_n^\times$, and more such factors are produced if $4\mid n$.
In particular, it follows immediately that if $\Z_2 \nleq \Z_n^\times$ then either $n=1$ or $n = 2$, and therefore $S(x;\Z_2) = 2$ for all $x \geq 2$.

In general, the number of copies of~$\Z_2$ present in the primary decomposition of the multiplicative group~$\Z_n^\times$ is nearly the same as the number of prime factors of~$n$. Indeed, we will be able to derive our asymptotic formula for $S(x;\Z_2^k)$ quickly from the classical counting function of integers with a given number of prime factors.

\begin{definition} \label{pi_m def}
For all nonnegative integers~$m$, define
\[
\pi_m(x) = \# \{ n \leq x \colon \omega (n) = m \} \quad\text{and}\quad
\pi_m^\star(x) = \# \{ n \leq x \colon 2 \nmid n,\, \omega (n) = m \} \\
\]
\end{definition}

The estimation of $\pi_m(x)$ is classical:

\begin{lemma} \label{pim}
For all positive integers~$m$,
\begin{align*}
\pi_m(x)
=
\frac{1}{(m-1)!}
\frac{x (\log_2 x)^{m-1}}{\log x}
+
O_m \biggl( \frac{x (\log_2 x)^{m-2}}{\log x} \biggr).
\end{align*}
\end{lemma}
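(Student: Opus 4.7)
The plan is to prove Lemma~\ref{pim} by induction on $m$, using a classical identity that partitions integers with $\omega(n)=m$ according to a distinguished prime divisor.

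Base case $m=1$: $\pi_1(x)$ counts prime powers up to $x$, so $\pi_1(x) = \pi(x) + \pi(x^{1/2}) + \pi(x^{1/3}) + \cdots = \pi(x) + O(\sqrt{x})$. The prime number theorem $\pi(x) = x/\log x + O(x/(\log x)^2)$ then yields the claim, since $O(x/(\log x)^2) \subseteq O(x/(\log x\,\log_2 x))$ (as $\log x \ge \log_2 x$), matching the leading constant $1/0! = 1$.

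Inductive step: From $\omega(n) = \sum_{p \mid n} 1$ one obtains
\begin{equation*}
m\,\pi_m(x) = \sum_{\substack{n \le x \\ \omega(n)=m}} \omega(n) = \sum_p \sum_{\substack{a \ge 1 \\ p^a \le x}} \pi_{m-1}^{(p)}(x/p^a),
\end{equation*}
where $\pi_{m-1}^{(p)}(y) = \#\{k \le y \colon p \nmid k,\ \omega(k) = m-1\}$. The total contribution from $a \ge 2$, together with the difference $\pi_{m-1}^{(p)}(x/p) - \pi_{m-1}(x/p)$ (which is dominated by $\pi_{m-2}(x/p^2)$ obtained by writing $k = p^b \ell$), is $O_m(x(\log_2 x)^{m-2}/\log x)$, using the induction hypothesis for $\pi_{m-2}$ and a geometric summation. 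Substituting the induction hypothesis into $\pi_{m-1}(x/p)$ and invoking Lemma~\ref{log estimate} reduces $m\,\pi_m(x)$ to $\frac{x}{(m-2)!}\sum_{p \le x/2} \frac{(\log_2(x/p))^{m-2}}{p\log(x/p)}$ plus an acceptable error (controlled by Lemma~\ref{logsum bound 2}).

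The main obstacle is proving the sharp Mertens-type asymptotic
\begin{equation*}
\sum_{p \le x/2} \frac{(\log_2(x/p))^{m-2}}{p\log(x/p)} = \frac{m}{m-1}\cdot\frac{(\log_2 x)^{m-1}}{\log x} + O_m\!\Bigl(\frac{(\log_2 x)^{m-2}}{\log x}\Bigr),
\end{equation*}
which strengthens Lemma~\ref{logsum bound 2} from an upper bound to an asymptotic (the case $\gamma = 1$ is excluded from the asymptotic Lemma~\ref{logsum}). The plan is to establish this by partial summation against Mertens' theorem $\sum_{p \le t} 1/p = \log_2 t + B + O(1/\log t)$, splitting the range at $p = \sqrt{x}$: the range $p \le \sqrt{x}$ contributes $(\log_2 x)^{m-1}/\log x$ directly from Lemma~\ref{log estimate} plus Mertens, while the range $\sqrt{x} < p \le x/2$ contributes $(\log_2 x)^{m-1}/((m-1)\log x)$ via the substitutions $u = \log(x/t)$ followed by $v = \log u / \log\log x$, whose Jacobian yields $\int_0^1 v^{m-2}\,dv = 1/(m-1)$. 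Summing these two contributions produces the coefficient $1 + 1/(m-1) = m/(m-1)$, after which dividing $m\,\pi_m(x)$ through by $m$ gives Lemma~\ref{pim}. Because this is the classical Landau--Hardy--Ramanujan theorem, one could alternatively simply cite a standard analytic number theory text in place of the above argument.
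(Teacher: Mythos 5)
Your proof is correct in substance, but it takes a genuinely different route from the paper's. The paper treats Lemma~\ref{pim} as essentially known: it invokes the Selberg--Sathe asymptotic formula \cite[Theorem~6.4]{tenenbaum}, which expresses $\pi_m(x)$ with the analytic factor $\lambda\bigl((m-1)/\log_2 x\bigr)$, and then simply uses analyticity of $\lambda$ near $0$ to replace that factor by $1+O_m(1/\log_2 x)$. You instead reprove the Landau--Hardy--Ramanujan theorem from scratch by induction on $m$, via the identity $m\,\pi_m(x)=\sum_p\sum_{a\ge1,\,p^a\le x}\pi_{m-1}^{(p)}(x/p^a)$ together with a sharp Mertens-type evaluation of $\sum_{p\le x/2}(\log_2(x/p))^{m-2}/(p\log(x/p))$; your split at $p=\sqrt x$, the resulting coefficient $1+1/(m-1)=m/(m-1)$, and the final division by $m$ all check out and do yield the stated main term and error. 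What each approach buys: your key sum is a genuine strengthening of Lemma~\ref{logsum bound 2} from an upper bound to an asymptotic (the $\gamma=1$ case excluded from Lemma~\ref{logsum}), so it must be proved separately, and that is the real work your route requires --- precisely the work the paper's citation avoids, at the cost of invoking Selberg--Delange-type machinery; conversely your argument is elementary and self-contained, needing only Mertens' theorem and the prime number theorem. One small care point: at $m=2$, feeding the induction-hypothesis error $O\bigl(y(\log_2 y)^{-1}/\log y\bigr)$ for $\pi_1(x/p)$ into the sum over $p$ and bounding crudely via Lemma~\ref{logsum bound 2} gives only $O(x\log_2 x/\log x)$, which is too large; you should instead use the sharper error $O\bigl(y/(\log y)^2\bigr)$ for $\pi_1$ (which your base case already provides), for which the sum over $p$ is $O(x/\log x)$ as required, while for $m\ge3$ the propagation via Lemma~\ref{logsum bound 2} with $k=m-3\ge0$ works exactly as you say. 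With that adjustment the induction closes, and, as you note, citing a standard reference is the shortcut the paper actually takes.
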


\begin{proof}
While this statement can be proved by induction on~$m$, we instead use the Selberg--Sathe asymptotic formula (see for example~\cite[Theorem~6.4]{tenenbaum}), which asserts that
\begin{align*}
\pi_m(x)
=
\frac{1}{(m-1)!}
\frac{x (\log_2 x)^{m-1}}{\log x}
\biggl(
\lambda \biggr( \frac{m-1}{\log_2 x} \biggr)
+
O_m \biggl(\frac{1}{(\log_2 x)^2} \biggr)
\biggr),
\end{align*}
where
\begin{align}
\lambda(z)
=
\frac{1}{\Gamma (1+z)}
\prod_p
\biggl( 1 + \frac{z}{p-1} \biggr)
\biggl(1-\frac{1}{p} \biggr)^z.
\end{align}
Since $\lambda$ is analytic on some neighbourhood of 0, it follows that $\lambda(z) = \lambda(0) + O(z)$ for sufficiently small $z$.
Therefore,
\begin{align*}
\lambda \biggr( \frac{m-1}{\log_2 x} \biggr)
=
1 + O \biggr( \frac{m-1}{\log_2 x} \biggr),
\end{align*}
which completes the proof.
\end{proof}

It turns out that the same asymptotic formula holds when we count only odd integers:

\begin{lemma} \label{cm}
For all positive integers~$m$,
\begin{align*}
\pi_m^\star(x)
=
\frac{1}{(m-1)!}
\frac{x (\log_2 x)^{m-1}}{\log x}
+
O_m \biggl( \frac{x (\log_2 x)^{m-2}}{\log x} \biggr).
\end{align*}
\end{lemma}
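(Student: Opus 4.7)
My approach is to relate $\pi_m^\star(x)$ to the already-estimated $\pi_m(x)$ from Lemma~\ref{pim} by partitioning integers according to their $2$-adic valuation. Every integer $n \leq x$ with $\omega(n) = m$ is either odd, contributing to $\pi_m^\star(x)$, or uniquely expressible as $n = 2^a r$ with $a \geq 1$, $r$ odd, and $\omega(r) = m - 1$. Partitioning the even contribution by $a$ yields the identity
\[
\pi_m(x) = \pi_m^\star(x) + \sum_{a=1}^{\infty} \pi_{m-1}^\star(x/2^a),
\]
so it suffices to show that the shifted sum is $O_m\bigl(x(\log_2 x)^{m-2}/\log x\bigr)$; the asymptotic for $\pi_m^\star(x)$ then follows by subtraction from Lemma~\ref{pim}, since the main term is inherited verbatim from $\pi_m(x)$.

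For $m \geq 2$, the trivial inequality $\pi_{m-1}^\star(t) \leq \pi_{m-1}(t) \ll_m t(\log_2 t)^{m-2}/\log t$, a consequence of Lemma~\ref{pim}, verifies the growth hypothesis of Lemma~\ref{function bound} with $k = m-2$, $\gamma = 1$, and base $2$, applied to $f(t) = \pi_{m-1}^\star(t)$; the bound $f(t) \leq t$ is immediate. The conclusion of Lemma~\ref{function bound} then gives $\sum_{a \geq 0} \pi_{m-1}^\star(x/2^a) \ll_m x(\log_2 x)^{m-2}/\log x$, which is exactly the required error.

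For the base case $m = 1$, the function $\pi_0^\star(t)$ equals $1$ for $t \geq 1$ and does not decay, so Lemma~\ref{function bound} does not apply directly. However, the shifted sum collapses to $\sum_{a \geq 1} \pi_0^\star(x/2^a) = \lfloor \log_2 x \rfloor$, which is trivially $O\bigl(x/(\log x \cdot \log_2 x)\bigr)$, and combined with $\pi_1(x) = \pi(x) = x/\log x + O(x/\log^2 x)$ from the prime number theorem (equivalently, Lemma~\ref{pim} with $m=1$), this yields the claimed asymptotic for $\pi_1^\star(x)$.

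I do not anticipate any significant obstacle, as the result is essentially a sieve of the even integers out of Lemma~\ref{pim}; the only minor technicality is the separate treatment of the degenerate case $m = 1$, where $\pi_0^\star$ is constant rather than exhibiting the logarithmic decay assumed by Lemma~\ref{function bound}.
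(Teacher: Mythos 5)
Your proof is correct and follows essentially the same route as the paper: write $\pi_m(x)-\pi_m^\star(x)=\sum_{a\ge1}\pi_{m-1}^\star(x/2^a)$, bound the shifted sum via $\pi_{m-1}^\star\le\pi_{m-1}$ together with Lemmas~\ref{pim} and~\ref{function bound}, and subtract. Your separate treatment of $m=1$ is if anything slightly more careful than the paper's uniform argument; just note that there your ``$\lfloor\log_2 x\rfloor$'' should be read as $\lfloor\log x/\log 2\rfloor$, since in this paper $\log_2 x$ denotes $\log\log x$.
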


\begin{proof}
Every integer counted by $\pi_m(x)$ but not by $\pi_m^\star(x)$ can be written as $n=2^\ell r$ where $r\le x/2^\ell$ and $2\nmid r$ and $\omega(r) = m-1$; consequently,
\begin{align*}
\pi_m(x) - \pi_m^\star(x)
=
\sum_{\ell = 1}^\infty \pi_{m-1}^\star (x/2^\ell)
\leq
\sum_{\ell = 1}^\infty \pi_{m-1} (x/2^\ell).
\end{align*}
The claimed asymptotic formula follows since Lemmas~\ref{function bound} and~\ref{pim} imply that
\[
\sum_{\ell = 1}^\infty \pi_{m-1} (x/2^\ell) \ll_m \frac{x (\log_2 x)^{m-2}}{\log x}. \qedhere
\]
\end{proof}

With the estimation of $\pi_m^\star(x)$ complete, we are ready to establish the asymptotic formula for $S(x;\Z_2^k)$.

\begin{proof}[Proof of Theorem~\ref{main theorem 0}]
Since every odd prime dividing~$n$ produces a primary factor~$\Z_2$ in the multiplicative group~$\Z_n^\times$, we see that $S_0 (x;\Z_2^k)$ is exactly equal to the number of odd integers up to~$x$ with at most $k-1$ distinct (odd) prime factors, which is to say that
\[
S_0(x;\Z_2^k)
=
\sum_{m = 0}^{k-1}
\pi_m^\star (x).
\]
It follows directly from Lemma~\ref{cm} that
\begin{align} \label{S_0 estimate 2}
S_0 (x;\Z_2^k) = \frac{1}{(k-2)!}
\frac{x (\log_2 x)^{k-2}}{\log x}
+
O_k \biggl(
\frac{x (\log_2 x)^{k-3}}{\log x}
\biggr).
\end{align}
Then by equation~\eqref{group by ell} and Lemma~\ref{Sell to S0 lemma},
\begin{align}
S(x;\Z_2^k) &= \sum_{\ell=0}^{\infty} S_\ell(x;\Z_2^k) \notag \\
&= S_0(x,\Z_2^k) + S_0(x/2,\Z_2^k) + S_0 (x/4,\Z_2^{k-1}) + \sum_{\ell = 3}^{\infty} S_0 (x/{2^\ell},\Z_2^{k-2}).
\label{S_l sum evaluation}
\end{align}
By equation~\eqref{S_0 estimate 2}, the first two terms contribute
\begin{align*}
S_0(x, & \Z_2^k) + S_0(x/2,\Z_2^k) \\
&= 
\frac{1}{(k-2)!} \frac{x (\log_2 x)^{k-2}}{\log x}
+ \frac{1}{(k-2)!} \frac{(x/2) (\log_2 (x/2))^{k-2}}{\log (x/2)}
+ O_k \biggl( \frac{x (\log_2 x)^{k-3}}{\log x} \biggr) \\
&= 
\frac{3}{2(k-2)!} \frac{x (\log_2 x)^{k-2}}{\log x}
+ O_k \biggl( \frac{x (\log_2 x)^{k-3}}{\log x} \biggr)
\end{align*}
using Lemma~\ref{log estimate}, while the remaining terms contribute
\begin{align*}
S_0 (x/4,\Z_2^{k-1}) + \sum_{\ell = 3}^{\infty} S_0 (x/{2^\ell},\Z_2^{k-2}) \ll_k \frac{x (\log_2 x)^{k-3}}{\log x} + \frac{x (\log_2 x)^{k-4}}{\log x}
\end{align*}
by equation~\eqref{S_0 estimate 2} again and Lemma~\ref{function bound}. Inserting these last two estimates into equation~\eqref{S_l sum evaluation} completes the proof of the theorem.
\end{proof}

\subsection{Estimation of $S_0(x;\Z_{p^\alpha}^k)$ and $S(x;\Z_{p^\alpha}^k)$}

We now return to the goal of asymptotically evaluating $S_0(x;\Z_{p^\alpha}^k)$ when~$p^\alpha$ is a prime power exceeding~$2$, which is the main technical task of this paper (although we have done much of the technical work already in Section~\ref{downey stuff}). Afterwards we will derive an asymptotic formula for $S(x;\Z_{p^\alpha}^k)$ itself.

\begin{lemma} \label{sum splitting}
For any prime power $p^\alpha \geq 3$ and any~$k\in\N$,
\begin{align} \label{splitting S}
S_0(x;\Z_{p^\alpha}^k) = \sum_{m = 0}^{k-1} D_m(x;p^\alpha),
\end{align}
where $D_m(x;p^\alpha)$ is as in Definition~\ref{D m x p alpha def}.
\end{lemma}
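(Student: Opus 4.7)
The plan is to prove the identity by giving a structural characterization of which $n$ with $p \nmid n$ are counted by $S_0(x;\Z_{p^\alpha}^k)$, then partitioning those $n$ by the exact number of prime factors congruent to $1 \pmod{p^\alpha}$. Specifically, I would show that for such $n$, the condition $\Z_{p^\alpha}^k \nleq \Z_n^\times$ is equivalent to $n$ having at most $k-1$ distinct prime factors $q \equiv 1 \pmod{p^\alpha}$; summing over the exact count $m \in \{0, 1, \dots, k-1\}$ then matches the definition of $D_m(x;p^\alpha)$ exactly.

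First, I would use the Chinese remainder theorem to write $\Z_n^\times \cong \prod_{q^e \parallel n} \Z_{q^e}^\times$, and then compute the $p$-Sylow subgroup. For every odd prime $q \neq p$ dividing $n$, the group $\Z_{q^e}^\times$ is cyclic of order $q^{e-1}(q-1)$, so its $p$-part is cyclic of order $p^{v_p(q-1)}$. When $p$ is odd and $q=2 \mid n$, the group $\Z_{2^e}^\times$ is a $2$-group and contributes trivially to the $p$-Sylow. When $p=2$, the hypothesis $p^\alpha \geq 3$ forces $\alpha \geq 2$, and since $p \nmid n$ means $n$ is odd, only odd primes $q$ arise. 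Thus in all relevant cases, the $p$-Sylow of $\Z_n^\times$ is isomorphic to $\prod_q \Z_{p^{v_p(q-1)}}$, where $q$ ranges over the odd primes dividing $n$ other than~$p$.

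Next, I would apply the basic fact that $\Z_{p^\alpha}^k$ embeds in an abelian $p$-group $\prod_i \Z_{p^{a_i}}$ if and only if at least $k$ of the exponents $a_i$ satisfy $a_i \geq \alpha$. Applied to the $p$-Sylow above, $\Z_{p^\alpha}^k \leq \Z_n^\times$ is equivalent to $n$ having at least $k$ distinct prime factors $q$ with $v_p(q-1) \geq \alpha$, i.e.\ $q \equiv 1 \pmod{p^\alpha}$. I would then verify that the condition $q \equiv 1 \pmod{p^\alpha}$ automatically excludes $q=p$ (as $p \not\equiv 1 \pmod p$) and, when $p$ is odd, excludes $q=2$ (since $p^\alpha \geq 3$ forces $2 \not\equiv 1 \pmod{p^\alpha}$). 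Hence counting such prime factors matches the count used in Definition~\ref{D m x p alpha def} precisely.

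Combining these observations, $S_0(x;\Z_{p^\alpha}^k)$ counts exactly those $n \leq x$ with $p \nmid n$ whose number $m$ of distinct prime factors $\equiv 1 \pmod{p^\alpha}$ lies in $\{0,1,\dots,k-1\}$. Partitioning by the exact value of $m$ yields the desired decomposition $S_0(x;\Z_{p^\alpha}^k) = \sum_{m=0}^{k-1} D_m(x;p^\alpha)$. There is no serious obstacle here; the proof is purely structural, and the main thing to be careful about is the case analysis separating $p$ odd from $p=2$ with $\alpha \geq 2$, together with the verification that the condition $p^\alpha \geq 3$ is exactly what excludes the degenerate case treated separately in Theorem~\ref{main theorem 0}.
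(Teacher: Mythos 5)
Your proposal is correct and follows essentially the same route as the paper: decompose $\Z_n^\times$ via the Chinese remainder theorem, observe that only the $\Z_{q-1}$ factors (the $p$-part coming from primes $q \equiv 1 \pmod{p^\alpha}$) matter, and conclude that $\Z_{p^\alpha}^k \nleq \Z_n^\times$ exactly when $n$ has fewer than $k$ such prime factors, then partition by the exact count $m$. Your explicit appeal to the embedding criterion for $\Z_{p^\alpha}^k$ in a product of cyclic $p$-groups is just a slightly more detailed phrasing of the step the paper states directly.
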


\begin{proof}
By definition, $S_0 (x;\Z_{p^\alpha}^k)$ is the number of integers $N\le x$ not divisible by~$p$ such that $\Z_{p^\alpha}^k \nleq \Z_N^\times$.
Given $N = 2^{\beta} p_1^{\beta_1} ... p_\ell^{\beta_\ell}$ with $p \nmid N$, we have
\begin{align*}
\Z_N^\times
&\cong
\Z_{2^\beta}^\times \times \Z_{p_1^{\beta_1}}^\times \times \cdots \times \Z_{p_\ell^{\beta_\ell}}^\times \\
&\cong \Z_{2^\beta}^\times \times (\Z_{p_1^{\beta_1-1}} \times \Z_{p_1-1}) \times \cdots \times (\Z_{p_\ell^{\beta_\ell-1}} \times \Z_{p_\ell-1}) \\
&\cong ( \Z_{2^\beta}^\times \times \Z_{p_1^{\beta_1-1}} \times \cdots \times \Z_{p_\ell^{\beta_\ell-1}} ) \times (\Z_{p_1-1} \times \cdots \times \Z_{p_\ell-1}).
\end{align*}
Note that we can disregard the $\Z_{2^\beta}^\times$ factor completely, since $\beta=0$ if $p=2$, while $\Z_{p^\alpha}$ cannot be a subgroup of the $2$-group $\Z_{2^\beta}^\times$ if~$p$ is odd. Similarly, we can disregard each factor $\Z_{p_j}^{\beta_j-1}$ since~$p$ does not equal any of the~$p_j$; therefore $\Z_{p^\alpha}^k \nleq \Z_N^\times$ if and only if $\Z_{p^\alpha}^k \nleq \Z_{p_1-1} \times \cdots \times \Z_{p_m-1}$. Finally, each cyclic group $\Z_{p_j-1}$ contains a copy of $\Z_{p^\alpha}$ precisely when $p_j\equiv 1\mod{p^\alpha}$.
Consequently, $\Z_{p^\alpha}^k \nleq \Z_N^\times$ if and only if~$n$ has fewer than~$k$ prime factors~$p_j \equiv 1  \mod{p^\alpha}$, which completes the proof.
\end{proof}

We are now in a position to complete the asymptotic evaluation of $S(x;\Z_{p^\alpha}^k)$, once we define the leading constant $K(p^\alpha,k)$ that appears.

\begin{definition} \label{sum constant}
Given a prime~$p$ and positive integers~$\alpha$ and~$k$, define
\begin{equation*}
K(p^\alpha, k) =
\begin{dcases}
\frac{\delta(\B_{p^\alpha})/(k-1)!}{\Gamma \bigl(1 - 1/{\phi(p^\alpha)} \bigr)} \frac{p^{\alpha+1}-1}{p^{k(\alpha-1) + 1 } (p-1)^k},&
\textrm{if $p \geq 3$,} \\
\frac{\delta(\B_{2^\alpha}) /(k-1)! }{\Gamma \bigl(1 - 1/\phi(2^\alpha) \bigr)} \frac{2^{\alpha+2}-1}{2^{k(\alpha-1) + 2 }},&
\textrm{if $p = 2$ and $\alpha \geq 2$,} \\
1, & \textrm{if $p^\alpha = 2$,}
\end{dcases}
\end{equation*}
where $\B_{p^\alpha}$ is as in Definition~\ref{B p alpha def}.
\end{definition}

\begin{remark}
The quantity $K(2,k)$ in the last case needs to be given some positive value for Theorem~\ref{main theorem 1} to make sense in all possible cases. Its exact value is actually irrelevant, however: the corresponding summand will never be a main term in the sum in equation~\eqref{main theorem 0 eq} (equivalently, $\Z_2^k$ is never a dominant summand of a finite abelian group that is not simply isomorphic to~$\Z_2^k$ itself), and thus the value of $K(2,k)$ never appears in Theorem~\ref{main theorem 2}---the earlier Theorem~\ref{main theorem 0} handles those cases explicitly.
\end{remark}

\begin{remark}
It is possible to write down an exact formula for $\delta(\B_{p^\alpha})$, in terms of either an absolutely convergent infinite product such as~\cite[equation~(1.3)]{chang}, or algebraic invariants of the field $\Q(e^{2\pi i/p^\alpha})$ as in~\cite[Remark~4.2]{chang}. Indeed, the special case $\alpha=1$ already appears as~\cite[Proposition~2.6]{downey}, the proof of wihch uses \cite[Proposition~4.1 and Remark~3.5]{chang}.
\end{remark}

\begin{proposition} \label{S-sum estimate}
For any prime power $p^\alpha\ge3$, any $k \in \N$, and any $x\ge3$,
\begin{equation*}
S(x;\Z_{p^\alpha}^k)
= K(p^\alpha, k)
\frac{ x(\log_2 x)^{k-1}}{ (\log x)^{1/\phi(p^\alpha)}}
+ O_{p, \alpha , k} \biggl ( \frac{x ( \log_2 x)^{k-2}}{(\log x)^{1/ \phi(p^\alpha)}} \biggr ).
\end{equation*}
\end{proposition}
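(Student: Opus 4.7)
The plan is to combine three ingredients already developed in the paper: the decomposition $S(x;\Z_{p^\alpha}^k)=\sum_{\ell\ge0}S_\ell(x;\Z_{p^\alpha}^k)$ from equation~\eqref{group by ell}, the reduction $S_\ell(x;\Z_{p^\alpha}^k)\rightsquigarrow S_0(x/p^\ell;\cdots)$ supplied by Lemma~\ref{Sell to S0 lemma}, and the asymptotic evaluation of $D_m(x;p^\alpha)$ given by Lemma~\ref{D-sum estimate new}. First, I would handle $S_0(x;\Z_{p^\alpha}^k)$ in isolation. By Lemma~\ref{sum splitting},
\[
S_0(x;\Z_{p^\alpha}^k)=\sum_{m=0}^{k-1}D_m(x;p^\alpha),
\]
and Lemma~\ref{D-sum estimate new} shows that only the $m=k-1$ term contributes to the main term, while $m\le k-2$ are absorbed into the error. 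Thus
\[
S_0(x;\Z_{p^\alpha}^k)=\frac{\delta(\B_{p^\alpha})/(k-1)!}{\Gamma(1-1/\phi(p^\alpha))}\cdot\frac{x(\log_2 x)^{k-1}}{\phi(p^\alpha)^{k-1}(\log x)^{1/\phi(p^\alpha)}}+O_{p,\alpha,k}\!\left(\frac{x(\log_2 x)^{k-2}}{(\log x)^{1/\phi(p^\alpha)}}\right).
\]

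Next, I would sum over $\ell$. For odd $p$, Lemma~\ref{Sell to S0 lemma} gives $S_\ell=S_0(x/p^\ell;\Z_{p^\alpha}^k)$ when $\ell\le\alpha$ and $S_\ell=S_0(x/p^\ell;\Z_{p^\alpha}^{k-1})$ when $\ell\ge\alpha+1$ (analogously with $p=2$, $\alpha\ge2$, and threshold $\alpha+1$). The first range is a finite sum over $0\le\ell\le\alpha$, so I apply Lemma~\ref{log estimate} to replace $(\log_2(x/p^\ell))^{k-1}/(\log(x/p^\ell))^{1/\phi(p^\alpha)}$ by $(\log_2 x)^{k-1}/(\log x)^{1/\phi(p^\alpha)}$ up to a relative error of size $1/\log_2 x$, collecting the clean geometric factor
\[
\sum_{\ell=0}^{\alpha}\frac{1}{p^\ell}=\frac{p^{\alpha+1}-1}{p^\alpha(p-1)}
\]
(and the analogous $\sum_{\ell=0}^{\alpha+1}2^{-\ell}=(2^{\alpha+2}-1)/2^{\alpha+1}$ when $p=2$). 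For the tail range, the groups involved are $\Z_{p^\alpha}^{k-1}$, so each $S_0(x/p^\ell;\Z_{p^\alpha}^{k-1})$ already carries only $(\log_2)^{k-2}$; after summing over $\ell$ via Lemma~\ref{function bound}, these contributions are swallowed by the advertised error term.

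It then remains to identify the arithmetic of the constant. Using $\phi(p^\alpha)=p^{\alpha-1}(p-1)$ for odd $p$, the product
\[
\frac{\delta(\B_{p^\alpha})/(k-1)!}{\Gamma(1-1/\phi(p^\alpha))}\cdot\frac{1}{\phi(p^\alpha)^{k-1}}\cdot\frac{p^{\alpha+1}-1}{p^\alpha(p-1)}
\]
simplifies to $K(p^\alpha,k)$ exactly as in Definition~\ref{sum constant}; the $p=2$ case with $\alpha\ge2$ is parallel, using $\phi(2^\alpha)=2^{\alpha-1}$ and the corresponding geometric factor.

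The main obstacle is bookkeeping: I need to verify that every relative error of size $O((\log n)/\log x)$ arising from Lemma~\ref{log estimate}, combined with the $O((\log_2 x)^{-1})$-type errors inherited from Lemma~\ref{D-sum estimate new}, and the tail sums handled by Lemma~\ref{function bound}, all fit inside the unified bound $O(x(\log_2 x)^{k-2}/(\log x)^{1/\phi(p^\alpha)})$. The only delicate case is $k=1$, where the ``main term'' already lives at the level of $(\log_2 x)^{0}$ and one must check that the shape of the error term in Lemma~\ref{D-sum estimate new} (involving $(\log_2 x)^{m-1}$ for $m=0$) is still acceptable, which follows because the tail sum $\ell\ge\alpha+1$ is vacuous in the $\Z_{p^\alpha}^{k-1}=\Z_{p^\alpha}^0$ sense (trivially zero) and the finite range $\ell\le\alpha$ contributes a bounded number of pieces, each with the expected shape.
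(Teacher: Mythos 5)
Your proposal is correct and follows essentially the same route as the paper: decompose via equation~\eqref{group by ell} and Lemma~\ref{Sell to S0 lemma}, evaluate $S_0$ through Lemma~\ref{sum splitting} and Lemma~\ref{D-sum estimate new}, sum the finite range with Lemma~\ref{log estimate} and the tail with Lemma~\ref{function bound}, and match the geometric factor to Definition~\ref{sum constant}. The constant bookkeeping (including the $p=2$, $\alpha\ge2$ case with threshold $\alpha+1$) checks out exactly as in the paper's proof.
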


\begin{proof}
We first note that if we set $\alpha^* = \alpha$ when $p$ is odd and $\alpha^* = \alpha+1$ when $p=2$, it follows from equation~\eqref{group by ell} and Lemma~\ref{Sell to S0 lemma} that
\begin{equation} \label{S-sum first step}
S(x;\Z_{p^\alpha}^k)
= \sum_{\ell=0}^{\infty} S_\ell(x; \Z_{p^\alpha}^k)
= \sum_{\ell=0}^{\alpha^*} S_0(x/p^\ell; \Z_{p^\alpha}^k)
+ \sum_{\ell = \alpha^*+1}^\infty S_0(x/p^\ell; \Z_{p^\alpha}^{k-1}).
\end{equation}
By Lemma~\ref{D-sum estimate new},
we can rewrite Lemma~\ref{sum splitting} as
\begin{align} \label{S_0 estimate}
S_0 (x; \Z_{p^\alpha}^k)
&= \sum_{m = 0}^{k-1} D_m(x;p^\alpha)&
\nonumber
\\
&=
\frac{\delta(\B_{p^\alpha}) /(k-1)! }{\Gamma (1 - 1/\phi(p^\alpha))}
\frac{x(\log_2 x)^{k-1}}{\phi(p^\alpha)^{k-1} (\log x)^{1/\phi(p^\alpha)}}
+
O_{p, \alpha , k}
\biggl (
\frac{x(\log_2 x)^{k-2}}{ (\log x)^{1/\phi(p^\alpha)}}
\biggr ).
\end{align}
In particular, $S_0 (x;  \Z_{p^\alpha}^{k-1}) \ll_{p,\alpha,k} {x (\log_2 x)^{k-2}} / {(\log x)^{1/\phi(p^\alpha)}}$, and therefore Lemma~\ref{function bound} implies
\[
\sum_{\ell = \alpha^*+1}^\infty S_0(x/p^\ell;  \Z_{p^\alpha}^{k-1}) \ll_{p,\alpha,k} \frac{x(\log_2 x)^{k-2}}{ (\log x)^{1/\phi(p^\alpha)}}.
\]
It therefore remains to evaluate the first sum on the right-hand side of equation~\eqref{S-sum first step}. By equation~\eqref{S_0 estimate} and Lemma~\ref{log estimate},
\begin{align*}
\sum_{\ell=0}^{\alpha^*} & S_0(x/p^\ell;  \Z_{p^\alpha}^{k}) \\
&=
\frac{x \delta(\B_{p^\alpha}) /(k-1)! }{\phi(p^\alpha)^{k-1}\Gamma (1 - 1/\phi(p^\alpha))}
\sum_{\ell=0}^{\alpha^*}
\biggl (
\frac{(\log_2 (x/p^\ell))^{k-1}}{p^\ell(\log (x/p^\ell))^{1/ \phi(p^\alpha)}}
+
O_{p, \alpha , k}
\biggl (
\frac{(\log_2 (x/p^\ell))^{k-2}}{(\log (x/p^\ell))^{1/ \phi(p^\alpha)}}
\biggr )
\biggr ) \\
&=
\frac{x \delta(\B_{p^\alpha}) /(k-1)! }{\phi(p^\alpha)^{k-1}\Gamma (1 - 1/\phi(p^\alpha))}
\sum_{\ell=0}^{\alpha^*}
\biggl (
\frac{(\log_2 x)^{k-1}}{p^\ell(\log x)^{1/ \phi(p^\alpha)}}
+
O_{p, \alpha , k}
\biggl (
\frac{(\log_2 x)^{k-2}}{(\log x)^{1/ \phi(p^\alpha)}}
\biggr ) \biggr) \\
&=
\frac{x \delta(\B_{p^\alpha}) /(k-1)! }{\phi(p^\alpha)^{k-1}\Gamma (1 - 1/\phi(p^\alpha))}
\frac{(\log_2 x)^{k-1}}{(\log x)^{1/\phi(p^\alpha)}} \sum_{\ell = 0}^{\alpha^*} \frac{1}{p^\ell}
+ O_{p, \alpha , k}
\biggl (
\frac{x(\log_2 x)^{k-2}}{(\log x)^{1/ \phi(p^\alpha)}}
\biggr ).
\end{align*}
The application of Lemma~\ref{log estimate} requires that $\sqrt x \ge p^{\alpha^*}$, but because the implicit constant in the error term is allowed to depend on~$p$ and~$\alpha$, the resulting asymptotic formula is valid for all $x\ge3$.
Since
\begin{align*}
\frac{1}{\phi(p^\alpha)^{k-1}} \sum_{\ell=0}^{\alpha^*} \frac{1}{p^\ell}
= \frac{1}{p^{(\alpha -1)(k-1)} (p-1)^{k-1}} \frac{p^{\alpha^*+1}-1}{p^{\alpha^*} (p-1)}
= \begin{dcases}
\frac{p^{\alpha+1}-1}{p^{k(\alpha-1) + 1 } (p-1)^k}, & \textrm{ if $p \geq 3$}, \\
\frac{2^{\alpha+2}-1}{2^{k(\alpha-1) + 2 }}, & \textrm{ if $p = 2$},
\end{dcases}
\end{align*}
the proposition now follows by Definition~\ref{sum constant}.
\end{proof}

Between Theorem~\ref{main theorem 0} and Proposition~\ref{S-sum estimate},
we have now found asymptotic formulas for $S(x;\Z_{p^\alpha}^k)$ for all gathered summands $\Z_{p^\alpha}^k$.
To complete the proof of Theorem~\ref{main theorem 2} itself which concerns the general counting function $S(x;G)$, we need to understand how the various gathered summands of~$G$ interact.

\section{General finite abelian groups} \label{general section}

If the cardinalities of two finite abelian groups~$G_1$ and~$G_2$ are relatively prime, then an abelian group contains a copy of $G_1\times G_2$ if and only if it contains a copy of~$G_1$ and a copy of~$G_2$. Consequently, $S(x;G_1\times G_2) \le S(x;G_1) + S(x;G_2)$ when $(\#G_1,\#G_2)=1$, and the only reason this inequality is not an equality is because the right-hand side double-counts multiplicative groups that contain neither~$G_1$ nor~$G_2$. This inequality extends immediately to any number of factors with relatively prime cardinalities.

In general, however, a finite abelian group can have subgroups isomorphic to both~$G_1$ and~$G_2$ without having a subgroup isomorphic to $G_1\times G_2$. For example, $\Z_{16}^\times \cong \Z_4 \times \Z_2$ has subgroups isomorphic to $\Z_4$ and $\Z_2^2$, but does not have a subgroup isomorphic to $\Z_4\times \Z_2^2$.
It follows that $S(x;G_1\times G_2)$ is not directly comparable to $S(x,G_1) + S(x;G_2)$ in general. However, this difficulty can be overcome with a bit of deliberate attention.

In the next section, we estimate $S(x;H_1)$ and $S(x;H_1\times H_2)$ when~$H_1$ and~$H_2$ are both finite abelian $p$-groups. These are the last analytic tools we need to prove our main result, Theorem~\ref{main theorem 2}, in the final section.

\subsection{Finite abelian $p$-groups}

Our first lemma uses a convenient group-theoretic observation to show that the main contribution to $S(x;H)$, where~$H$ is a finite abelian $p$-group, comes from the dominant summand of~$H$ (which in this case is the gathered summand of~$H$ corresponding to the largest power of~$p$).

\begin{lemma} \label{S(x;H)}
Let $H \cong \Z_{p^{\alpha_1}}^{k_1} \times \cdots \times \Z_{p^{\alpha_\ell}}^{k_\ell}$ be a finite abelian $p$-group
with $\alpha_1 < \cdots < \alpha_\ell$, and set $m = k_1 + k_2 + \cdots + k_\ell$.
Then
\begin{equation}
S(x; H) = S(x;\Z_{p^{\alpha_\ell}}^{k_\ell}) + O_H \biggl( \frac{x (\log_2 x)^{m-1}}{(\log x)^{1/\phi(p^{\alpha_\ell-1})}}  \biggr) .          \nonumber
\end{equation}
\end{lemma}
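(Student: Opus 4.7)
My plan is to reduce the estimation of $S(x;H)$ to a small collection of single-gathered-summand counting functions $S(x;\Z_{p^{\alpha_i}}^{k_i+\cdots+k_\ell})$, each of which was already handled by Proposition~\ref{S-sum estimate} (or, in the edge case $p=2$ and $\alpha_i=1$, by Theorem~\ref{main theorem 0}). The main term will come from the $i=\ell$ piece, which is exactly $S(x;\Z_{p^{\alpha_\ell}}^{k_\ell})$, and all other pieces will be absorbed into the claimed error term.

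The key group-theoretic input is the following equivalence for abelian $p$-groups: if $P$ denotes the $p$-Sylow subgroup of $\Z_n^\times$, then $H \le P$ if and only if $\Z_{p^{\alpha_i}}^{k_i+\cdots+k_\ell} \le P$ for each $i=1,\dots,\ell$. This follows from the standard Young-diagram criterion that $H\le P$ is equivalent to $\rank_{\mathbb{F}_p}(p^{t-1}P) \ge \rank_{\mathbb{F}_p}(p^{t-1}H)$ for every $t \ge 1$: on the interval $t\in(\alpha_{i-1},\alpha_i]$ (with the convention $\alpha_0=0$) the right-hand side is constant at $k_i+\cdots+k_\ell$ while the left-hand side is nonincreasing in~$t$, so the binding constraint occurs at $t=\alpha_i$; and ``$P$ has at least $k_i+\cdots+k_\ell$ cyclic summands of order $\ge p^{\alpha_i}$'' is in turn equivalent to $\Z_{p^{\alpha_i}}^{k_i+\cdots+k_\ell} \le P$. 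I would record this as a short preliminary lemma.

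This equivalence yields the set identity
\[
\bigl\{ n\le x\colon H \nleq \Z_n^\times \bigr\}
= \bigcup_{i=1}^{\ell}
\bigl\{ n\le x\colon \Z_{p^{\alpha_i}}^{k_i+\cdots+k_\ell} \nleq \Z_n^\times \bigr\},
\]
from which, isolating the $i=\ell$ summand and using a union bound on the remaining $\ell-1$ summands (and the trivial opposite inequality $S(x;\Z_{p^{\alpha_\ell}}^{k_\ell}) \le S(x;H)$ coming from $\Z_{p^{\alpha_\ell}}^{k_\ell}\le H$), we obtain
\[
S(x;\Z_{p^{\alpha_\ell}}^{k_\ell}) \le S(x;H) \le S(x;\Z_{p^{\alpha_\ell}}^{k_\ell}) + \sum_{i=1}^{\ell-1} S\bigl(x;\Z_{p^{\alpha_i}}^{k_i+\cdots+k_\ell}\bigr).
\]

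To close the argument I would bound each summand with $i<\ell$ using Proposition~\ref{S-sum estimate}, getting
\[
S\bigl(x;\Z_{p^{\alpha_i}}^{k_i+\cdots+k_\ell}\bigr) \ll_H \frac{x(\log_2 x)^{k_i+\cdots+k_\ell-1}}{(\log x)^{1/\phi(p^{\alpha_i})}},
\]
which is $\ll_H x(\log_2 x)^{m-1}/(\log x)^{1/\phi(p^{\alpha_\ell-1})}$ since $k_i+\cdots+k_\ell\le m$ and $\alpha_i \le \alpha_{\ell-1}\le \alpha_\ell-1$ forces $\phi(p^{\alpha_i}) \le \phi(p^{\alpha_\ell-1})$. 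The one case not covered by Proposition~\ref{S-sum estimate} is $p=2$ and $\alpha_i=1$ (which forces $i=1$), but there Theorem~\ref{main theorem 0} gives the even smaller bound $\ll_H x(\log_2 x)^{m-2}/\log x$, comfortably inside the allowed error since $1/\phi(p^{\alpha_\ell-1})\le 1$. I do not anticipate any serious obstacle; the only point requiring care is the initial abelian $p$-group observation, together with bookkeeping the minor $p=2,\ \alpha_1=1$ edge case.
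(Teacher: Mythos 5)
Your argument is correct, and it reaches the lemma by a genuinely different (though closely related) route from the paper. The paper sandwiches $H$: it notes $\Z_{p^{\alpha_\ell}}^{k_\ell} \le H \le \Z_{p^{\alpha_\ell-1}}^{m-k_\ell}\times\Z_{p^{\alpha_\ell}}^{k_\ell}$, observes that any abelian group containing both $\Z_{p^{\alpha_\ell-1}}^{m}$ and $\Z_{p^{\alpha_\ell}}^{k_\ell}$ contains $\Z_{p^{\alpha_\ell-1}}^{m-k_\ell}\times\Z_{p^{\alpha_\ell}}^{k_\ell}$, and so bounds $S(x;H) \le S(x;\Z_{p^{\alpha_\ell-1}}^{m}) + S(x;\Z_{p^{\alpha_\ell}}^{k_\ell})$, with Proposition~\ref{S-sum estimate} finishing the proof. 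You instead invoke the full embedding criterion for finite abelian $p$-groups (conjugate-partition domination, i.e.\ $\rank(p^{t-1}H)\le\rank(p^{t-1}P)$ for all $t\ge1$) to show that $H\nleq\Z_n^\times$ exactly when $\Z_{p^{\alpha_i}}^{k_i+\cdots+k_\ell}\nleq\Z_n^\times$ for some $i$, and then run an $\ell$-fold union bound. The group theory underlying the two arguments is of the same nature---the paper's two-group observation is precisely the special case of your criterion applied to the majorant $\Z_{p^{\alpha_\ell-1}}^{m-k_\ell}\times\Z_{p^{\alpha_\ell}}^{k_\ell}$---but your version leans on the full criterion (standard, though it should indeed be recorded and proved or cited as the preliminary lemma you propose), whereas the paper gets by with a lighter ad hoc containment. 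What your decomposition buys is an exact set identity rather than a majorization, and in fact a marginally sharper error term with exponent $1/\phi(p^{\alpha_{\ell-1}})$ in place of $1/\phi(p^{\alpha_\ell-1})$ (strictly better whenever $\alpha_{\ell-1}<\alpha_\ell-1$); your explicit treatment of the $p^{\alpha_1}=2$ case via Theorem~\ref{main theorem 0}, where Proposition~\ref{S-sum estimate} does not apply, is also a detail the paper leaves implicit. The analytic endgame---bounding each lower term by Proposition~\ref{S-sum estimate} and absorbing it into the claimed error using $\alpha_i\le\alpha_\ell-1$---is the same as the paper's.
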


\begin{proof}
Note that $S(x;\Z_{p^{\alpha_\ell}}^{k_\ell}) \leq S(x; H)$ since $\Z_{p^{\alpha_\ell}}^{k_\ell} \leq H$.
Additionally, $H \leq \Z_{p^{\alpha_\ell-1}}^{m-k_\ell} \times \Z_{p^{\alpha_\ell}}^{k_\ell}$, and if an abelian group contains both $\Z_{p^{\alpha_\ell-1}}^{m}$ and $\Z_{p^{\alpha_\ell}}^{k_\ell}$ then it also contains $\Z_{p^{\alpha_\ell-1}}^{m-k_\ell} \times \Z_{p^{\alpha_\ell}}^{k_\ell}$.
Therefore $S(x;H) \leq  S(x;\Z_{p^{\alpha_\ell-1}}^{m-k_\ell} \times \Z_{p^{\alpha_\ell}}^{k_\ell}) \leq S(x;\Z_{p^{\alpha_\ell-1}}^m) + S(x;\Z_{p^{\alpha_\ell}}^{k_\ell})$.
Hence $S(x; H) = S(x;\Z_{p^{\alpha_\ell}}^{k_\ell}) + O \bigl( S(x;\Z_{p^{\alpha_\ell-1}}^m) \bigr)$, which completes the proof by Proposition~\ref{S-sum estimate}.
\end{proof}

\begin{corollary} \label{S-sum corollary}
Let~$G$ be a finite abelian group.
If $\Z_{p^\alpha}^k$ is a dominant summand of~$G$, and~$H\le G$ is a $q$-group with $\Z_{p^\alpha}^k \nleq H$, then
\begin{equation} \label{S sum 1}
S(x;H) \ll_G \frac{ x(\log_2 x)^{k-2}}{(\log x)^{1/\phi(p^\alpha)}}.
\end{equation}
\end{corollary}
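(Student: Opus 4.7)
The strategy is to apply Lemma~\ref{S(x;H)} to reduce $S(x;H)$ to the contribution of the largest gathered summand of~$H$, bound that via Proposition~\ref{S-sum estimate}, and then exploit the dominance of $\Z_{p^\alpha}^k$ together with the hypothesis $\Z_{p^\alpha}^k\nleq H$ to gain one extra factor of $\log_2 x$ in the denominator beyond the ``dominant rate''. Writing $H\cong \Z_{q^{\alpha_1}}^{k_1}\times\cdots\times \Z_{q^{\alpha_\ell}}^{k_\ell}$ with $\alpha_1<\cdots<\alpha_\ell$ and $m=k_1+\cdots+k_\ell$, these two results combine to give
\[
S(x;H)\ll_G \frac{x(\log_2 x)^{k_\ell - 1}}{(\log x)^{1/\phi(q^{\alpha_\ell})}}
+\frac{x(\log_2 x)^{m-1}}{(\log x)^{1/\phi(q^{\alpha_\ell-1})}}.
\]

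The heart of the proof is to establish the strict comparison $\Z_{q^{\alpha_\ell}}^{k_\ell}\prec \Z_{p^\alpha}^k$, meaning either $\phi(q^{\alpha_\ell})<\phi(p^\alpha)$, or $\phi(q^{\alpha_\ell})=\phi(p^\alpha)$ with $k_\ell\le k-1$. Because $\Z_{q^{\alpha_\ell}}^{k_\ell}\le G$, the primary decomposition of~$G$ contains some gathered $q$-summand $\Z_{q^\delta}^{j_\delta}$ with $\delta\ge\alpha_\ell$; by the dominance of $\Z_{p^\alpha}^k$ we have $\Z_{q^\delta}^{j_\delta}\preccurlyeq \Z_{p^\alpha}^k$, and since $\phi$ is strictly increasing in the exponent for a fixed prime, $\alpha_\ell\le\delta$ forces $\phi(q^{\alpha_\ell})\le\phi(p^\alpha)$. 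Equality then requires $\alpha_\ell=\delta$ and $k_\ell\le j_\delta\le k$; the lone surviving possibility $k_\ell=k$ forces $q=p$ and $\alpha_\ell=\alpha$, which would give $\Z_{p^\alpha}^k\le H$, contradicting the hypothesis.

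Once the strict comparison is in hand, both terms in the displayed bound are $\ll x(\log_2 x)^{k-2}/(\log x)^{1/\phi(p^\alpha)}$. Indeed, if $\phi(q^{\alpha_\ell})<\phi(p^\alpha)$, the positive power of $\log x$ gained in the denominator absorbs every polylog factor and also controls the error term, since $\phi(q^{\alpha_\ell-1})\le\phi(q^{\alpha_\ell})<\phi(p^\alpha)$; whereas if $\phi(q^{\alpha_\ell})=\phi(p^\alpha)$ with $k_\ell\le k-1$, then $(\log_2 x)^{k_\ell-1}\le (\log_2 x)^{k-2}$ directly, and the error term is smaller because $\alpha_\ell-1<\alpha_\ell$ makes $\phi(q^{\alpha_\ell-1})<\phi(p^\alpha)$. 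The main obstacle is the middle step: correctly reading off the constraint on the pair $(\phi(q^{\alpha_\ell}),k_\ell)$ from $\Z_{q^{\alpha_\ell}}^{k_\ell}\le G$ together with dominance, and recognizing that the unique borderline configuration $(q^{\alpha_\ell},k_\ell)=(p^\alpha,k)$ is precisely the one ruled out by $\Z_{p^\alpha}^k\nleq H$.
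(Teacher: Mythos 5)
Your overall route is the same as the paper's: reduce $S(x;H)$ to the top gathered summand $\Z_{q^{\alpha_\ell}}^{k_\ell}$ of $H$ via Lemma~\ref{S(x;H)} and Proposition~\ref{S-sum estimate}, then argue that the pair $(\phi(q^{\alpha_\ell}),k_\ell)$ lies strictly below $(\phi(p^\alpha),k)$ in the preorder. The genuine gap is in your final elimination step: from $\phi(q^{\alpha_\ell})=\phi(p^\alpha)$ and $k_\ell=k$ you conclude that $q=p$ and $\alpha_\ell=\alpha$, but equal $\phi$-values do not identify the prime powers --- distinct prime powers can share a $\phi$-value, which is precisely the content of Lemma~\ref{puzzle} (e.g.\ $\phi(3)=\phi(4)$, $\phi(5)=\phi(8)$). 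In that configuration $\Z_{q^{\alpha_\ell}}^{k}$ is a \emph{second} dominant summand of $G$, it may well sit inside $H$, and $\Z_{p^\alpha}^k\nleq H$ yields no contradiction. Concretely, take $G\cong\Z_4\times\Z_3$, $p^\alpha=4$, $k=1$, $H=\Z_3$: every stated hypothesis holds, yet $S(x;H)=S(x;\Z_3)\asymp x/(\log x)^{1/2}$ by Proposition~\ref{S-sum estimate}, which exceeds the claimed bound $x(\log_2 x)^{-1}/(\log x)^{1/2}$ by a factor of $\log_2 x$. So this borderline case cannot be dispatched from the stated hypotheses, and no patch of that one step can save it. (A smaller gloss: your inequality $k_\ell\le j_\delta$ also needs a word --- it uses both the monotonicity of the number of cyclic summands of order at least $q^{\alpha_\ell}$ under subgroups and the fact, forced by dominance, that $G$ has no $q$-summand of exponent exceeding $\alpha_\ell$.)

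To be fair, this soft spot is in the statement itself rather than peculiar to your argument: the paper's own proof passes over the same point by asserting that the dominant summand of $H$ is ``a gathered summand, but not a dominant summand, of $G$,'' which likewise does not follow from $\Z_{p^\alpha}^k\nleq H$ alone when $G$ has two dominant summands. The corollary is only ever invoked for Sylow subgroups attached to primes carrying no dominant summand of $G$, and under the strengthened hypothesis that $H$ contains no dominant summand of $G$ your argument closes exactly as you wrote it: in the borderline case one finds that $\Z_{q^{\alpha_\ell}}^{k}$ is a dominant summand of $G$ contained in $H$, contradicting that hypothesis instead of forcing $q=p$. So to make your proof (or the paper's) airtight, either add that hypothesis or assume $\Z_{p^\alpha}^k$ is the unique dominant summand; as literally stated, the claim fails in the two-dominant-summand configuration above.
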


\begin{proof}
If $\Z_{q^\beta}^\ell$ is the dominant summand of~$H$, then Lemma~\ref{S(x;H)} implies that for some integer~$m$ depending on~$H$,
\begin{equation} \label{S sum 2}
S(x;H) = S(x;\Z_{q^\beta}^\ell) + O_H \biggl( \frac{x (\log_2 x)^{m-1}}{(\log x)^{1/\phi(q^{\beta-1})}}  \biggr) \ll_G \frac{ x(\log_2 x)^{\ell-1}}{(\log x)^{1/\phi(q^\beta)}}
\end{equation}
by Proposition~\ref{S-sum estimate} (or Theorem~\ref{main theorem 0} if $q^\beta=2$). Note that $\Z_{q^\beta}^\ell$ is a gathered summand, but not a dominant summand, of~$G$.
By Definition~\eqref{dominant def}, either $\phi(q^\beta) < \phi(p^\alpha)$, or else $\phi(q^\beta) = \phi(p^\alpha)$ but $\ell<k$. In the second case, the bound~\eqref{S sum 2} implies the bound~\eqref{S sum 1} immediately since $\ell\le k-1$; in the first case, the bound~\eqref{S sum 2} implies the bound~\eqref{S sum 1}, regardless of the values of~$k$ and~$\ell$, because ${(\log x)^{1/\phi(p^\alpha)}} \ll {(\log x)^{1/\phi(q^\beta)}}$.
\end{proof}

The inequality in the first paragraph of Section~\ref{general section} is useful, but in one circumstance we will need not just an inequality but an asymptotic formula. We accomplish this, when $(\#H_1,\#H_2)=1$, by writing
\begin{equation} \label{incl excl}
S(x;G) = S(x;H_1) + S(x;H_2) - S(x;H_1,H_2),
\end{equation}
where (to compensate for the aforementioned double-counting) we have defined
\begin{equation*}
S (x; H_1 , H_2) = \sum_{\substack{n \leq x \\ H_1 \nleq \Z_n^\times \\ H_2 \nleq \Z_n^\times}} 1.
\end{equation*}

\begin{lemma} \label{S(x;H1H2)}
Let~$H_1$ be a finite abelian $p_1$-group and~$H_2$ a finite abelian $p_2$-group with $p_1\ne p_2$, and set $H=H_1\times H_2$. For $i=1,2$ write
\begin{equation*}
H_i \cong \Z_{p_i^{\alpha_{1,i}}}^{k_{1,i}} \times \Z_{p_i^{\alpha_{2,i}}}^{k_{2,i}} \times \cdots \times \Z_{p_i^{\alpha_{\ell_i, i}}}^{k_{\ell_i, i}}
\end{equation*}
with $\alpha_{1, i} < \cdots < \alpha_{\ell_i, i}$, and set $\beta_i = \alpha_{\ell_i,i}$. Then
\begin{equation} \label{rho error}
S(x;H) =  S(x; H_1) + S(x; H_2)  +
O_H \biggl(\frac{x (\log_2 x)^{m_1 + m_2-2}}{(\log x)^\rho}
\biggr),
\end{equation}
where $m_i = k_{1,i} + \cdots + k_{\ell_i, i}$ and $\rho = 1/\phi(p_1^{\beta_1}) + 1/\phi(p_2^{\beta_2}) - 1/\phi(p_1^{\beta_1} p_2^{\beta_2})$.
In particular, if $p_1^{\beta_1} \ge3$ then
\begin{equation} \label{-1 error}
S(x;H) = S(x; H_1) + S(x; H_2) + O_H \biggl( \frac{x (\log_2 x)^{-1}}{(\log x)^{1/\phi(p_1^{\beta_1})}} \biggr).
\end{equation}
\end{lemma}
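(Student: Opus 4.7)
The plan is to leverage the inclusion-exclusion identity in equation~\eqref{incl excl} and reduce everything to controlling the ``doubly bad'' count $S(x;H_1,H_2)$. Writing $D_i = \Z_{p_i^{\beta_i}}^{k_{\ell_i,i}}$ for the dominant summand of~$H_i$, the group-theoretic observation underpinning Lemma~\ref{S(x;H)}---that $H_i$ embeds into $\Z_{p_i^{\beta_i-1}}^{m_i-k_{\ell_i,i}} \times D_i$ and hence into any finite abelian group that simultaneously contains $\Z_{p_i^{\beta_i-1}}^{m_i}$ and $D_i$---yields the contrapositive implication that $H_i \nleq \Z_n^\times$ forces either $D_i \nleq \Z_n^\times$ or $\Z_{p_i^{\beta_i-1}}^{m_i} \nleq \Z_n^\times$ (the second condition being vacuous when $\beta_i = 1$, in which case $H_i = D_i$ anyway). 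Distributing over both $i$ gives
\[
S(x;H_1,H_2) \leq \sum_{(G_1,G_2)} \#\{n\leq x\colon G_1 \nleq \Z_n^\times \textrm{ and } G_2 \nleq \Z_n^\times\},
\]
where the sum ranges over the (at most four) pairs with $G_i \in \{D_i, \Z_{p_i^{\beta_i-1}}^{m_i}\}$.

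Next, I would translate each pair-count into a residue-class count as in the proof of Lemma~\ref{sum splitting}: if $\Z_{p_i^{\gamma_i}}^{\kappa_i} \nleq \Z_n^\times$ then~$n$ has fewer than~$\kappa_i$ distinct prime factors congruent to~$1$ modulo~$p_i^{\gamma_i}$. Since $p_1 \neq p_2$, the Chinese remainder theorem identifies the union $\{q\equiv 1\mod{p_1^{\gamma_1}}\} \cup \{q\equiv 1\mod{p_2^{\gamma_2}}\}$ with a set of $\phi(p_1^{\gamma_1}) + \phi(p_2^{\gamma_2}) - 1$ reduced residue classes modulo $p_1^{\gamma_1} p_2^{\gamma_2}$, of relative density
\[
\rho_{\gamma_1,\gamma_2} = \frac{1}{\phi(p_1^{\gamma_1})} + \frac{1}{\phi(p_2^{\gamma_2})} - \frac{1}{\phi(p_1^{\gamma_1} p_2^{\gamma_2})} = \frac{1}{\phi(p_1^{\gamma_1})} + \frac{1}{\phi(p_2^{\gamma_2})}\biggl(1 - \frac{1}{\phi(p_1^{\gamma_1})}\biggr).
\]
Taking $\B$ to be the complementary set of reduced residue classes and invoking Lemma~\ref{C(x,T) new} with its parameter~$k$ equal to $\kappa_1+\kappa_2-2$, each pair-count is bounded by $\ll_H x(\log_2 x)^{\kappa_1+\kappa_2-2}/(\log x)^{\rho_{\gamma_1,\gamma_2}}$. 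The rewritten expression above shows that $\rho_{\gamma_1,\gamma_2}$ is minimized at $(\gamma_1,\gamma_2)=(\beta_1,\beta_2)$; combined with $\kappa_1+\kappa_2-2 \leq m_1+m_2-2$, this lets us bound each of the (at most four) summands by the single quantity
\[
S(x;H_1,H_2) \ll_H \frac{x(\log_2 x)^{m_1+m_2-2}}{(\log x)^\rho},
\]
which yields~\eqref{rho error} upon substitution into~\eqref{incl excl}.

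For~\eqref{-1 error}, the hypothesis $p_1^{\beta_1} \geq 3$ forces $\phi(p_1^{\beta_1}) \geq 2$, so the identity above produces $\rho - 1/\phi(p_1^{\beta_1}) = (1 - 1/\phi(p_1^{\beta_1}))/\phi(p_2^{\beta_2}) > 0$; the resulting factor $(\log x)^{\rho - 1/\phi(p_1^{\beta_1})}$ in the denominator of~\eqref{rho error} dwarfs any fixed power of $\log_2 x$, so we may weaken the exponent of $\log_2 x$ in the error down to $-1$. The main technical hurdle is justifying the opening group-theoretic step: one must verify, by invariant-factor bookkeeping on the $p_i$-primary part of $\Z_n^\times$, that any finite abelian $p_i$-group with at least $m_i$ invariant factors of order $\geq p_i^{\beta_i-1}$ and at least $k_{\ell_i,i}$ of order $\geq p_i^{\beta_i}$ contains a copy of $\Z_{p_i^{\beta_i-1}}^{m_i-k_{\ell_i,i}} \times D_i$, and hence of $H_i$; everything else reduces to careful bookkeeping with Lemma~\ref{C(x,T) new}.
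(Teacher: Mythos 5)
Your proposal is correct and follows essentially the same route as the paper's proof: inclusion--exclusion via equation~\eqref{incl excl}, reduction of $S(x;H_1,H_2)$ to counting integers with at most $m_1+m_2-2$ distinct prime factors outside a union of reduced residue classes, an application of Lemma~\ref{C(x,T) new} with the density computed by the Chinese remainder theorem, and the same identity $\rho-1/\phi(p_1^{\beta_1})=(1-1/\phi(p_1^{\beta_1}))/\phi(p_2^{\beta_2})>0$ to pass to~\eqref{-1 error}. The only divergence is your opening reduction: the paper just notes $H_i\le\Z_{p_i^{\beta_i}}^{m_i}$, so $S(x;H_1,H_2)\le S(x;\Z_{p_1^{\beta_1}}^{m_1},\Z_{p_2^{\beta_2}}^{m_2})$ and only the single modulus $p_1^{\beta_1}p_2^{\beta_2}$ arises, making your four-case split via the Lemma~\ref{S(x;H)} observation and the monotonicity argument for $\rho_{\gamma_1,\gamma_2}$ valid but unnecessary.
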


\begin{proof}
By the inclusion--exclusion formula~\eqref{incl excl}, for the first claim it suffices to show that $S(x; H_1, H_2)$ is bounded by the error term in equation~\eqref{rho error}. Moreover, since $H_i \le \Z_{p_i^{\beta_i}}^{m_i}$, it suffices to establish such a bound for $S(x; \Z_{p_1^{\beta_1}}^{m_1}, \Z_{p_2^{\beta_2}}^{m_2})$.

Let $\B_H$ be the set of reduced residue classes modulo $p_1^{\beta_1} p_2^{\beta_2}$ such that $c \in \B_H$ precisely when $c \not \equiv 1 \mod{p_1^{\beta_1}}$ and $c \not \equiv 1 \mod{ p_2^{\beta_2}}$. By an argument similar to the proof of Lemma~\ref{sum splitting}, no prime in~$\B_H$ can contribute any factors of $\Z_{p_1^{\beta_1}}$ or $\Z_{p_2^{\beta_2}}$ to a multiplicative group.
It follows that if both $\Z_{p_1^{\beta_1}}^{m_1} \nleq \Z_n^\times$ and $\Z_{p_2^{\beta_2}}^{m_2} \nleq \Z_n^\times$, then~$n$ has at most $m_1+m_2-2$ prime factors $q \notin \B_H$.
Since $\B_H$ is the union of $\tau = \phi(p_1^{\beta_1}p_2^{\beta_2}) - \phi(p_1^{\beta_1}) - \phi(p_2^{\beta_2}) + 1$ distinct reduced residue classes modulo~$p_1^{\beta_1} p_2^{\beta_2}$, Lemma~\ref{C(x,T) new} implies that
\begin{align*}
S(x; \Z_{p_1^{\beta_1}}^{m_1}, \Z_{p_2^{\beta_2}}^{m_2}) \ll_H
\frac{x (\log_2 x)^{m_1 + m_2-2}}{(\log x)^{1-\tau/\phi(p_1^{\beta_1} p_2^{\beta_2})}},
\end{align*}
which is the same as the error expression in equation~\eqref{rho error}.

In addition, $\rho - 1/\phi(p_1^{\beta_1}) = (1 - 1/\phi(p_1^{\beta_1}))/\phi(p_2^{\beta_2}) > 0$ when $p_1^{\beta_1} \ge3$, which means that equation~\eqref{rho error} implies equation~\eqref{-1 error}.
\end{proof}

\subsection{The general case}

We now have all the tools needed to complete our estimation of $S(x;G)$ and thereby establish Theorem~\ref{main theorem 2}. The statement of that theorem, however, includes the assumption (as mentioned in Remark~\ref{dominant remark}) that a finite abelian group can have at most two dominant summands; we now justify that assumption with the following simple elementary number theory argument.

\begin{lemma} \label{puzzle}
There can be at most two prime powers that share the same $\phi$-value. Furthermore, when this occurs, one of the prime powers must be an actual prime.
\end{lemma}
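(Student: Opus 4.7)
The plan is to split the problem according to whether the exponent of each prime power equals $1$ or is at least $2$, and to show that each of these two categories contributes at most one prime power with any given $\phi$-value. The conclusion then follows by combining the two cases: we get at most $1+1=2$ prime powers sharing a $\phi$-value, and if the maximum of $2$ is attained, one of them must be a prime.

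First I would observe that among primes themselves, the $\phi$-value is injective: $\phi(p)=p-1$, so if $\phi(p)=\phi(q)$ for primes $p$ and $q$, then $p=q$. This disposes of the exponent-$1$ case immediately.

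Next I would handle the case of proper prime powers (exponent at least $2$). Suppose $\phi(p^\alpha)=\phi(q^\beta)$ with $\alpha,\beta\ge 2$. If $p=q$ then $p^{\alpha-1}(p-1)=p^{\beta-1}(p-1)$ forces $\alpha=\beta$ and the prime powers coincide. Otherwise $p\neq q$, and the main idea is to exploit the identity $p^{\alpha-1}(p-1)=q^{\beta-1}(q-1)$: since $p\mid p^{\alpha-1}(p-1)$ and $\gcd(p,q^{\beta-1})=1$, we must have $p\mid q-1$, which gives $p\le q-1<q$. By symmetry $q\mid p-1$, giving $q<p$. This contradiction shows that at most one proper prime power has any given $\phi$-value.

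Combining, the set of prime powers with a fixed $\phi$-value consists of at most one prime and at most one proper prime power, so has cardinality at most $2$; and when the cardinality is exactly $2$, one of the two prime powers must have exponent~$1$, i.e.\ be an actual prime. No step is really an obstacle here — the entire argument is a one-paragraph divisibility computation — and the only mild subtlety is remembering to separate out the $p=q$ sub-case before invoking the $p\mid q-1$ reasoning.
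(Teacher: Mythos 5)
Your proof is correct, and it is organized differently from the paper's. You partition the prime powers by exponent and prove that $\phi$ is injective on each of the two classes separately: trivially on primes, and on proper prime powers via the symmetric divisibility argument that $\phi(p^\alpha)=\phi(q^\beta)$ with $\alpha,\beta\ge 2$ and $p\ne q$ would force both $p\mid q-1$ and $q\mid p-1$, a contradiction; the bound of two and the ``one must be a prime'' conclusion then fall out simultaneously. The paper instead takes an arbitrary pair $p_1^{\alpha_1}\ne p_2^{\alpha_2}$ with equal $\phi$-values, orders them so that $p_1>p_2$, and notes that $p_1$ divides neither $p_2$ nor $p_2-1$, so $p_1^{\alpha_1-1}(p_1-1)=p_2^{\alpha_2-1}(p_2-1)$ forces $\alpha_1=1$; it then needs a short additional step to rule out a third prime power. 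Both arguments are elementary one-paragraph computations from $\phi(p^\alpha)=p^{\alpha-1}(p-1)$; your class-by-class injectivity avoids the extra step for a hypothetical third prime power, while the paper's ordering argument gives the slightly sharper information that it is the larger of the two prime powers that must be the actual prime (namely $\phi+1$), which your version does not state but also does not need for the lemma.
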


\begin{proof}
Suppose there exist prime powers $p_1^{\alpha_1} \neq p_2^{\alpha_2}$ such that $\phi (p_1 ^{\alpha_1}) = \phi (p_2 ^{\alpha_2})$.
Then $p_1 \neq p_2$, and we may assume without loss of generality that $p_1 > p_2$.
It thus follows that $\alpha_1 = 1$ as~$p_1$ divides neither~$p_2$ nor~$p_2 - 1$.
Then for any $p^\alpha$ such that $\phi(p^\alpha) = \phi (p_1 ^{\alpha_1}) = \phi (p_2 ^{\alpha_2})$, either $p^\alpha = p_2^{\alpha_2}$, or it again follows that $\alpha = 1$ and thus that $p = p_1$.
\end{proof}

\begin{proof}[Proof of Theorem~\ref{main theorem 2}]
For any way of writing
\begin{equation*}
G \cong G_1 \times G_2 \times \cdot \cdot \cdot \times G_\ell
\end{equation*}
where the $\# G_i$ are pairwise relatively prime, we have
\[
S(x;G_1) \le S(x;G) \le S(x;G_1) + \cdots + S(x;G_\ell)
\]
(the first inequality holds because $G_1 \le G$, while the second inequality holds by the union bound described in the first paragraph of Section~\ref{general section}); in particular,
\begin{equation} \label{asy}
S(x;G) = S(x;G_1) + O\bigl( S(x;G_2) + \cdots + S(x;G_\ell) \bigr).
\end{equation}

Suppose first that~$G$ has a unique dominant summand $\Z_{p^\alpha}^k$. We take~$G_1$ to be the $p$-Sylow subgroup of~$G$ (the largest $p$-group contained in~$G$) and $G_2,\dots,G_\ell$ to be the Sylow subgroups of~$G$ corresponding to the other prime factors of~$\#G$. Using Lemma~\ref{S(x;H)} for the main term and Corollary~\ref{S-sum corollary} for the error terms, equation~\eqref{asy} becomes
\begin{align*}
S(x;G)
&= \biggl( S(x;\Z_{p^\alpha}^k)
+O_{G_1}\biggl(\frac{x(\log_2 x)^m}{(\log x)^{1/ \phi(p^{\alpha-1})}} \biggr) \biggr)
+ O_G \biggl( \sum_{j=2}^\ell \frac{x (\log_2 x)^{k-2}}{(\log x)^{1/\phi(p^{\alpha})}} \biggr) \\
&= K(p^{\alpha}, k) \frac{x (\log_2 x)^{k-1}}{(\log x)^{1/\phi(p^{\alpha})}}
+ O_{G} \biggl(
\frac{x (\log_2 x)^{k-2}}{(\log x)^{1/\phi(p^{\alpha})}}
\biggr)
\end{align*}
by Proposition~\ref{S-sum estimate} (note that $p^\alpha\ge3$ since $G\not\cong\Z_2^k$ by assumption), which establishes the theorem in this case.

Suppose now that~$G$ has two dominant summands $\Z_{p^\alpha}^k$ and $\Z_{q^\beta}^k$. We take $G_1 \cong H_1 \times H_2$, where~$H_1$ is the $p$-Sylow subgroup of~$G$ and $H_2$ is the $q$-Sylow subgroup of~$G$, and $G_2,\dots,G_\ell$ to be the Sylow subgroups of~$G$ corresponding to the other prime factors of~$\#G$.
Using Lemma~\ref{S(x;H1H2)} for the main term and Corollary~\ref{S-sum corollary} for the error terms, equation~\eqref{asy} becomes
\begin{align*}
S(x;G_1) &= S(x; H_1) + S(x; H_2) + O_G \biggl( \frac{x (\log_2 x)^{-1}}{(\log x)^{1/\phi(p^\alpha)}} \biggr) + O_G \biggl( \sum_{j=2}^\ell \frac{x (\log_2 x)^{k-2}}{(\log x)^{1/\phi(p^{\alpha})}} \biggr) \\
&= K(p^{\alpha}, k) \frac{x (\log_2 x)^{k-1}}{(\log x)^{1/\phi(p^{\alpha})}} + K(q^\beta, k) \frac{x (\log_2 x)^{k-1}}{(\log x)^{1/\phi(q^\beta)}}
+ O_{G} \biggl(
\frac{x (\log_2 x)^{k-2}}{(\log x)^{1/\phi(p^{\alpha})}}
\biggr)
\end{align*}
again by Proposition~\ref{S-sum estimate}, which establishes the theorem in this case since $\phi(p^\alpha) = \phi(q^\beta)$.
\end{proof}

\section*{Acknowledgments}

The second author was supported in part by a Natural Sciences and Engineering Council of Canada Discovery Grant.

\bibliographystyle{abbrv}
\bibliography{bibliography}

\end{document}